\documentclass{amsart}
 \usepackage{graphicx}
 \newtheorem{thm}{Theorem}[section]
 \newtheorem{cor}[thm]{Corollary}
 \newtheorem{lema}[thm]{Lemma}
 
 \theoremstyle{definition}
 \newtheorem{definition}[thm]{Definition}

 \newtheorem{rem}[thm]{Remark}

\newcommand{\R}{\mathbb R}
\newcommand{\N}{\mathbb N}
\newcommand{\lam}{\lambda}
\newcommand{\ve}{\varepsilon}

\begin{document}

\markboth{J.P. Pinasco and A.M. Salort} {Curves in the Fucik  spectrum}

%
 
%

\title{ASYMPTOTIC BEHAVIOR OF THE CURVES \\ IN THE FUCIK  SPECTRUM}

\author[J P Pinasco]{Juan Pablo Pinasco}
\address{Departamento de Matem\'atica
 \hfill\break \indent FCEN - Universidad de Buenos Aires and
 \hfill\break \indent   IMAS - CONICET.
\hfill\break \indent Ciudad Universitaria, Pabell\'on I \hfill\break \indent   (1428)
Av. Cantilo s/n. \hfill\break \indent Buenos Aires, Argentina.}
\email{jpinasco@dm.uba.ar}
\urladdr{http://mate.dm.uba.ar/~jpinasco}

\author[A Salort]{Ariel Martin Salort}
\address{Departamento de Matem\'atica
 \hfill\break \indent FCEN - Universidad de Buenos Aires and
 \hfill\break \indent   IMAS - CONICET.
\hfill\break \indent Ciudad Universitaria, Pabell\'on I \hfill\break \indent   (1428)
Av. Cantilo s/n. \hfill\break \indent Buenos Aires, Argentina.}
\email{asalort@dm.uba.ar}
\urladdr{http://mate.dm.uba.ar/~asalort}


\subjclass[2010]{34L20, 34L30, 34B15}

\keywords{Fu{\v{c}}{\'{\i}}k  spectrum; eigenvalue bounds; Weyl's type estimates.}
\maketitle

\begin{abstract}
In this work we study the asymptotic behavior of the curves of the Fu{\v{c}}{\'{\i}}k spectrum
for weighted second order linear ordinary differential equations. We prove a Weyl type
asymptotic behavior of the hyperbolic type curves in the spectrum in terms of some
integrals of the weights. We present an algorithm which computes the intersection of the
Fu{\v{c}}{\'{\i}}k spectrum with rays through the origin, and we compare their values with
the asymptotic ones.
\end{abstract}

\section{Introduction}
We are interested here in the shape of the curves in the Fu{\v{c}}{\'{\i}}k  spectrum
 for the following problem
\begin{align} \label{ecu}
\begin{cases}
   -u''= \alpha \, m(x) u^+- \beta \, n(x) u^- \qquad x\in (0,L)\\
   u(0)=u(L)=0,
\end{cases}
\end{align}
where $(\alpha,\beta) \in \R^2_+$, and the functions $m$, $n\in C[0,L]$ are positive and
bounded below by some positive constant $c$. As usual, given a function $u$ we denote
$u^\pm = \max\{0,\pm u\}$ the positive and negative parts of $u$.

This problem was introduced in the '70s by Dancer and Fu{\v{c}}{\'{\i}}k  (see \cite{DAN,
FUCiK-libro}), for constant weights $m=n$. They were interested in problems with jumping
non-linearities, and they obtained for $m$, $n\equiv 1$ that \eqref{ecu} has a nontrivial
solution corresponding to $(\alpha, \beta)$ if and only if:

\begin{itemize}
\item $\alpha=(\pi/L)^2$, for any $\beta\in\R$,

 \item $\beta=(\pi/L)^2$, for any $\alpha\in\R$,

 \item $(\alpha, \beta)\in\R^2_+$ belong to the following families of hyperbolic-like curves $\mathcal{C}_k$,
\begin{align*}
\mathcal{C}_{k}: & \quad  \frac{k\pi}{2L \sqrt{\alpha}}+\frac{k\pi}{2L \sqrt{\beta}}=1, \qquad k\in \N  \mbox{ even} \\
\mathcal{C}_{k}^+: & \quad \frac{(k+1)\pi}{2L\sqrt{\alpha}}+\frac{(k-1)\pi}{2L\sqrt{\beta}}=1, \qquad k\in \N \mbox{ odd}\\
\mathcal{C}_{k}^-: & \quad \frac{(k-1)\pi}{2L\sqrt{\alpha}}+\frac{(k+1)\pi}{2L\sqrt{\beta}}=1, \qquad k\in \N  \mbox{ odd.}\\
\end{align*}
\end{itemize}

The existence of similar curves in the spectrum was proved later for non-constant weights
$m\not\equiv n$  by Rynne in \cite{RYN} , together with several properties about simplicity of
zeros, monotonicity of the eigenvalues respect to the pair of weights, continuity and
differentiability of the curves, and the asymptotic behavior of the curves when $\alpha$ or
$\beta \to \infty$. For sign-changing weights, similar results were obtained by Alif and
Gossez, and they can be found in \cite{A02, AG01}.

All these results were generalized for half-linear differential equations involving the one
dimensional $p-$Laplacian operator,
\begin{align} \label{ecu-1}
\begin{cases}
   -(|u'|^{p-2}u')'= \alpha m(x) |u^+|^{p-2}u^+ - \beta n(x) |u^-|^{p-2}u^- \qquad x\in (0,L)\\
   u(0)=u(L)=0
\end{cases}
\end{align}
with different boundary conditions, see for instance Section 6.3.3 in \cite{DR-libro} for the
constant coefficient case, or \cite{CCYZ14, AR3} where indefinite weights $m$, $n$ and
different boundary conditions were considered.

By using shooting arguments Rynne proved that the spectrum of problem \eqref{ecu}
can be described as an union of curves
$$\Sigma:= \mathcal{C}_0^\pm \cup \bigcup_{k\in \N } \mathcal{C}_k^\pm  . $$
The curves $\mathcal{C}_0^\pm$ are called the trivial curves since the eigenfunctions does
not changes signs, we can write $\mathcal{C}_0^+=\{\lam_1^m \} \times \R$,
$\mathcal{C}_0^-=\R \times \{\lam_1^n \}$, where $\lam_1^s$ denotes the first eigenvalue
of problem
\begin{equation}\label{ecu2}\begin{cases}
-u''= \lam s(x) u, & \qquad x\in (0,L)\\
u(0)=u(L)=0. & \end{cases}
\end{equation}
The curve $\mathcal{C}_k^+$ (resp. $\mathcal{C}_k^-$) correspond to pairs $(\alpha, \beta)$
with a nontrivial solution having $k$ internal zeros and positive (resp. negative) slope at the
origin. Let us observe that now we have two curves for $k$ even, in the constant coefficient
case both curves coincide but this is not true for general weights.

The curves $\mathcal{C}_k^+$ are not known for general weights $m$, $n$, and only its limit
behavior as $\alpha$ or $\beta\to \infty$ is known. This behavior  is related to the sequence
of eigenvalues $\{\lam_k^s\}_{k\in\N}$ of problem \eqref{ecu2},  and we have that:

\begin{itemize}
\item When $k\in \N$ is even, the curves  $\mathcal{C}_{k}^\pm$ are asymptotic to the line $ \R \times  \lambda_{k/2}^n $ as $\alpha\to \infty$; and to the line $\lambda_{k/2}^m \times  \R $ as $\beta\to \infty$.

\item When $k \in \N$ is odd,  $\mathcal{C}_{k}^+$ is asymptotic to  the line $\R\times \lam_{(k-1)/2}^n$, and  $\mathcal{C}_{k}^-$ is asymptotic to the line  $\R\times \lam_{(k+1)/2}^n$, as $\alpha\to +\infty$.

\item When $k \in \N$ is odd,  $\mathcal{C}_{k}^-$ is asymptotic to  the line $\R\times \lam_{(k+1)/2}^n$, and  $\mathcal{C}_{k}^-$ is asymptotic to the line  $\R\times \lam_{(k-1)/2}^n$, as $\alpha\to +\infty$.
\end{itemize}

Although in some cases the curves $\mathcal{C}_{k}^+$ and $\mathcal{C}_{k}^-$ can
coincide in one or more points, in this work we will think of them as different curves.
That is, we will consider as different two eigenfunctions if one of them starts with
positive slope, and the other one start with negative slope.

\bigskip

Our main objective in this work is to give a description of the asymptotic behavior of the
curves $\mathcal{C}_k^\pm$. Quantitative bounds of the Fu{\v{c}}{\'{\i}}k spectrum are by far less
common in the literature, and the qualitative description states that they are $C^1$ curves.
By using the ideas in \cite{Pi} it is possible to enclose  with an hyperbolic like curve the region
of the plane containing the Fu{\v{c}}{\'{\i}}k eigenpairs whose corresponding eigenfunctions
have $k$ nodal domains. Here, we give an explicit asymptotic description of the curves
together with an error term, in the spirit of Weyl's asymptotic of the Laplacian eigenvalues.

\bigskip
In order to state our results, let us introduce some notations.
We denote by $\mathcal{K}_\theta$ a symmetric region in the first quadrant (with respect to the line $x=y$) between
two rays through the origin forming an angle $\theta\in(0,\pi/2)$. Moreover,  $f(k) \sim g(k)$ as $k\to \infty$ means that $\lim_{k\to \infty}f(k)/g(k)=1$.

In order to improve the asymptotic estimates, we must impose an additional
condition on the weights, related to the Morrey-Campanato space $ L^{\gamma,1}(0,L)$:

\begin{definition}\label{gamacond}
	Given $\gamma> 0$, we say that the function $m$  satisfies the $\gamma$-condition if $m$ is continuous and
there exists two positive constants $m_1$, $m_2$ such that $0< m_1 \le m \le m_2<\infty $
and
 $m\in L^{\gamma,1}(0,L)$, the
 Morrey-Companato space of functions in $L^1(0,L)$ satisfying
	$$
		\int_{I} |m-m_I| \leq C 	|I|^\gamma
	$$ for any $I\subset (0,L)$, where $C$ is a fixed constant and
$$m_I =  \frac{1}{|I|}\int_I m(x)dx$$
is the mean value  of $m$ in $I$.
 \end{definition}

	Observe that, when $m$ is H\"older continuous of order $r >0$ and bounded away from
zero, then it satisfies the $\gamma-$condition for $0\leq \gamma \leq 1+r/2$.

\bigskip

Our first result correspond to the case $m=n$:

\begin{thm}\label{mainmn} Let $\Sigma$ be the Fu{\v{c}}{\'{\i}}k  spectrum of problem \eqref{ecu}, and let $m\equiv n$ be a
continuous function  in $[0,L]$,  bounded below by some some positive constant.
Let $(\alpha_k^\pm,\beta_k^\pm)\in \mathcal{C}_k^\pm$.
 Given some fixed angle $\theta\in(0,\pi/2)$, the curves $\mathcal{C}_k^\pm$ have the following asymptotic behavior in the region
 $\mathcal{K}_\theta$
$$1\sim \frac{ \pi k}{2 }  \left(\frac{(\alpha_k^\pm)^{-\frac{1}{2}} +  (\beta_k^\pm)^{-\frac{1}{2}}}{\int_0^L m^{\frac{1}{2}\, dx}}\right)$$
as $k\to \infty$.
 \end{thm}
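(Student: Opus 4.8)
The plan is to reduce the Fučík problem with $m\equiv n$ to a sequence of ordinary eigenvalue problems of the form \eqref{ecu2} via a shooting/time-map argument, exactly as in Rynne's construction, and then apply a Weyl-type count. Fix an eigenpair $(\alpha,\beta)\in\mathcal{C}_k^\pm$ with eigenfunction $u$ having $k$ internal zeros. On each nodal interval where $u>0$, the equation is $-u''=\alpha m(x)u$, so $u$ restricted there is (a multiple of) a first Dirichlet eigenfunction of $-v''=\lambda m(x)v$ on that interval with $\lambda=\alpha$; similarly on the intervals where $u<0$ the relevant parameter is $\beta$. Writing $a_1<\cdots<a_k$ for the internal zeros, the positive humps occupy a union of subintervals of total length $\ell^+$ and the negative humps a union of total length $\ell^-$, with $\ell^++\ell^-=L$. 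Because a first Dirichlet eigenvalue of $-v''=\lambda m v$ on an interval $I$ is governed by the ``optical length'' $\int_I m^{1/2}$, the matching condition reads, up to the corrections made precise below,
$$
\int_{\{u>0\}} m^{1/2}\,dx = \frac{\lceil k/2\rceil\,\pi}{\sqrt{\alpha}}\,(1+o(1)),
\qquad
\int_{\{u<0\}} m^{1/2}\,dx = \frac{\lfloor (k+1)/2\rfloor\,\pi}{\sqrt{\beta}}\,(1+o(1)),
$$
the two integer factors depending on the parity of $k$ and on the sign $\pm$ (the slope at the origin), but in all cases summing to $k+O(1)$. Adding these and using $\int_{\{u>0\}}m^{1/2}+\int_{\{u<0\}}m^{1/2}=\int_0^L m^{1/2}\,dx$ gives
$$
\int_0^L m^{1/2}\,dx = \frac{\pi}{\sqrt{\alpha}}\Big(\tfrac k2+O(1)\Big) + \frac{\pi}{\sqrt{\beta}}\Big(\tfrac k2+O(1)\Big),
$$
which is precisely the claimed relation $1\sim \tfrac{\pi k}{2}\big((\alpha^{-1/2}+\beta^{-1/2})/\int_0^L m^{1/2}\big)$ once we know $\alpha^{-1/2}+\beta^{-1/2}\to 0$, i.e. that $\alpha_k^\pm,\beta_k^\pm\to\infty$ in $\mathcal{K}_\theta$, and that the $O(1)$ terms are genuinely bounded uniformly.

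The technical heart is the Weyl estimate for the single interval. For a bounded-below continuous weight on an interval $I$ of length $h$, the $j$-th Dirichlet eigenvalue $\lambda_j(I)$ of $-v''=\lambda m v$ satisfies $\sqrt{\lambda_j(I)}\,\int_I m^{1/2} = j\pi + E_j(I)$ with an error $E_j(I)$ that is $o(j)$ in general, and $O(1)$ — indeed $O(h^{\gamma-1})$ summed appropriately — under the $\gamma$-condition of Definition \ref{gamacond}; this is the standard Prüfer-transformation computation (track the phase $\varphi$ with $u=\rho\sin\varphi$, $u'=\sqrt{\lambda m}\,\rho\cos\varphi$, integrate $\varphi'$, and estimate the oscillatory remainder by the Morrey–Campanato bound on $m-m_I$). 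Summing the errors over the $\simeq k/2$ nodal intervals, each hump contributing a bounded error (the humps have comparable length $\simeq L/k$, so $\sum h_i^{\gamma-1}\lesssim k\cdot(L/k)^{\gamma-1}=L^{\gamma-1}k^{2-\gamma}$, which is $o(k)$ for $\gamma>1$ and only $O(1)$ per hump is needed here), yields the uniform boundedness of the aggregated $O(1)$ term and hence the stated asymptotics. Without the $\gamma$-condition one still gets $1\sim(\cdots)$ since the per-hump error is $o(1)$ as the hump length tends to $0$; the $\gamma$-condition is what would let one upgrade the $o(1)$ in the theorem to an explicit error term as discussed in the introduction.

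The main obstacle is controlling the geometry of the nodal partition uniformly as $k\to\infty$ inside $\mathcal{K}_\theta$: one must show that no hump degenerates (length bounded below by $c/k$) and none is too long (length bounded above by $C/k$), so that the individual Prüfer errors stay summably small. This follows because on $\mathcal{K}_\theta$ the ratio $\alpha/\beta$ is pinched between two positive constants $\cot$ and $\tan$ of the cone's half-angles, forcing $\int_{\{u>0\}}m^{1/2}$ and $\int_{\{u<0\}}m^{1/2}$ to be comparable, hence each $\asymp k^{-1}$; combined with $m$ bounded above and below, each hump length is $\asymp k^{-1}$. A secondary point is the bookkeeping of the integer prefactors: for $k$ even both $\mathcal{C}_k^+$ and $\mathcal{C}_k^-$ give $k/2$ positive and $k/2$ negative humps (up to $\pm1$ depending on which sign hump sits at the endpoints), for $k$ odd the counts are $(k+1)/2$ and $(k-1)/2$ in one order or the other; in every case the sum of the two prefactors is $k+O(1)$, so the leading term is parity-independent and equals $\pi k/2$ as asserted. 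Passing $k\to\infty$, dividing by $\int_0^L m^{1/2}\,dx$, and using $\alpha_k^\pm,\beta_k^\pm\to\infty$ completes the argument.
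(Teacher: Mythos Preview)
Your argument is essentially correct and gives a valid proof of Theorem~\ref{mainmn}, but it proceeds along a genuinely different route from the paper.

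The paper does not treat Theorem~\ref{mainmn} separately; it is obtained as the special case $m=n$ of Theorem~\ref{main} (see Remark~\ref{asintotico}). The paper's strategy is to fix a ray $\beta=t\alpha$, introduce the spectral counting function $N(\lam,t,(0,L))$ for the half-eigenvalue problem~\eqref{ecux}, prove an additivity/bracketing result (Theorem~\ref{prop_suma}) by a Pr\"ufer-angle comparison, and deduce a Weyl law for $N$ (Theorem~\ref{ndelam}); the curve asymptotics then follow by inverting $N(\lam_{k,t}^\pm)\sim 2k$.

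You instead fix an eigenfunction on $\mathcal{C}_k^\pm$, decompose $[0,L]$ into its $k+1$ nodal intervals, and use on each hump the elementary relation $\sqrt{\alpha}\int_{I_i}m^{1/2}=\pi+o(1)$ (valid because $|I_i|\to 0$ and $m$ is uniformly continuous), then sum. This is more direct and avoids the bracketing machinery entirely. The reason it works so cleanly here is the identity $\int_{\{u>0\}}m^{1/2}+\int_{\{u<0\}}m^{1/2}=\int_0^L m^{1/2}$, which decouples the answer from the unknown nodal partition. For $m\not\equiv n$ that decoupling fails: your method would yield $\sqrt{\alpha}\int_{\{u>0\}}m^{1/2}+\sqrt{\beta}\int_{\{u<0\}}n^{1/2}\sim (k+1)\pi$, which still depends on the partition and does not reduce to the closed-form integrand $\big((\alpha m)^{-1/2}+(\beta n)^{-1/2}\big)^{-1}$ of Theorem~\ref{main}. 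So the paper's counting-function route is what is needed for the general case, while your nodal-hump argument is a pleasant shortcut in the equal-weight situation.

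Two small points of exposition worth tightening: (i) the Weyl estimate you invoke is not the usual $j\to\infty$ statement but rather the $j=1$, $|I|\to0$ version, namely $\sqrt{\lam_1(I)}\int_I m^{1/2}=\pi+O(\omega(|I|))$ with $\omega$ the modulus of continuity of $m$; this is what makes the per-hump error uniformly $o(1)$ and the sum $o(k)$. (ii) When you write ``hence each $\asymp k^{-1}$'' you mean each individual $\int_{I_i}m^{1/2}$, not the aggregate $\int_{\{u>0\}}m^{1/2}$; the hump-length control $|I_i|\asymp 1/\sqrt{\alpha}\asymp 1/k$ indeed follows from $\alpha=\lam_1(I_i)$ and the two-sided bounds on $m$, together with $\alpha\asymp\beta$ on $\mathcal{K}_\theta$.
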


For different weights $m$ and $n$ we have:

\begin{thm}\label{main} Let $\Sigma$ be the Fu{\v{c}}{\'{\i}}k  spectrum of problem \eqref{ecu} where $m$, $n$ are
 continuous functions in $[0,L]$,  bounded below by some some positive constant.
Let $(\alpha_k^\pm,\beta_k^\pm)\in \mathcal{C}_k^\pm$. Given some fixed angle $\theta\in(0,\pi/2)$,
 the
curves $\mathcal{C}_k^\pm$ have the following asymptotic behavior  in the region  $\mathcal{K}_\theta$
$$1\sim \frac{ \pi k}{2} \left( \int_0^L \Big((\alpha_k^\pm m)^{-\frac{1}{2}} +  (\beta_k^\pm n)^{-\frac{1}{2}}\Big)^{-1}\, dx \right)^{-1}$$
as $k\to \infty$.
Moreover, if $\big(m^{-\frac{1}{2}}+(n t)^{-\frac{1}{2}} \big)^{-1}$ satisfies the $\gamma$-condition for some $\gamma>1$, then  for
all $\delta \in [\frac{1}{\gamma},1]$, we have
$$1 =  \frac{ \pi k}{2} \left( \int_0^L \Big((\alpha_k^\pm m)^{-\frac{1}{2}} +  (\beta_k^\pm n)^{-\frac{1}{2}}\Big)^{-1}\, dx \right)^{-1} + O(k^{\delta})$$
as $k\to \infty$.

  \end{thm}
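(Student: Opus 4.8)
The plan is to translate the membership $(\alpha,\beta)\in\mathcal C_k^\pm$ into a zero–counting statement for a shooting solution and then to estimate the counting function by a Weyl–type argument. Let $u$ be the solution of \eqref{ecu} with $u(0)=0$ and $u'(0)=\sigma$, where $\sigma=+1$ for the curves $\mathcal C_k^+$ and $\sigma=-1$ for $\mathcal C_k^-$; since \eqref{ecu} is positively homogeneous of degree one in $u$, the size of the slope is irrelevant. Introduce the Prüfer angle $\theta$ by $u=r\sin\theta$, $u'=r\cos\theta$, so that $\theta(0)\in\{0,-\pi\}$ is fixed by the sign of $\sigma$ and
\begin{equation*}
\theta'(x)=\cos^2\theta+Q(x)\sin^2\theta,\qquad Q(x)=\begin{cases}\alpha\,m(x),& u(x)\ge 0,\\ \beta\,n(x),& u(x)<0.\end{cases}
\end{equation*}
Then $\theta$ is strictly increasing and the zeros of $u$ in $(0,L]$ are precisely the points where $\theta$ meets $\pi\mathbb Z$; hence $(\alpha,\beta)\in\mathcal C_k^\pm$ if and only if $u$ has exactly $k+1$ zeros in $(0,L]$, i.e. $\theta(L)-\theta(0)=(k+1)\pi$. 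The problem is thus reduced to computing this number asymptotically for $(\alpha_k^\pm,\beta_k^\pm)\in\mathcal C_k^\pm\cap\mathcal K_\theta$, where the cone condition gives $c_\theta\le \alpha_k^\pm/\beta_k^\pm\le c_\theta^{-1}$ and, since the curves escape to infinity, $\alpha_k^\pm,\beta_k^\pm\to\infty$.

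The local building block is the length of a single nodal arch. If $[a,b]$ is a maximal interval on which $u>0$, then $u$ solves $-u''=\alpha m u$ there with $u(a)=u(b)=0$, so $\alpha$ is the first Dirichlet eigenvalue of $-v''=\lambda m v$ on $[a,b]$; Sturm comparison with the constant weights $\inf_{[a,b]}m$ and $\sup_{[a,b]}m$ gives
$$\frac{\pi}{\sqrt{\alpha\,\sup_{[a,b]}m}}\le b-a\le\frac{\pi}{\sqrt{\alpha\,\inf_{[a,b]}m}},$$
and analogously for negative arches with $\beta,n$. In particular every arch has length $O(\alpha^{-1/2})$ (resp. $O(\beta^{-1/2})$), so all arches shrink uniformly as $k\to\infty$; together with the uniform continuity of $m$ and $n$ this yields $\sqrt{\alpha}\int_a^b\sqrt m\,dx=\pi(1+o(1))$ uniformly in the arch, and similarly for negative arches. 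Reading these two facts together, a consecutive positive–negative pair of arches based near a point $x$ occupies an interval of length $\pi\big((\alpha m(x))^{-1/2}+(\beta n(x))^{-1/2}\big)(1+o(1))$.

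For the first, purely asymptotic, statement I would run a Riemann–sum argument. Fix a partition of $[0,L]$ into $M=M(k)$ intervals $I_j$ with $M(k)\to\infty$ and $M(k)=o(k)$. On each $I_j$ the weights are nearly constant, so by the previous paragraph the number of nodal arches of $u$ meeting $I_j$ is $\frac{2|I_j|}{\pi}\big((\alpha_k m(x_j))^{-1/2}+(\beta_k n(x_j))^{-1/2}\big)^{-1}(1+o(1))$, with a boundary error of at most one arch per interval endpoint, i.e. $O(M)=o(k)$ in total; the hypothesis $(\alpha_k^\pm,\beta_k^\pm)\in\mathcal K_\theta$ makes the $o(1)$'s uniform. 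Since the number of arches equals the number of zeros in $(0,L]$, summing over $j$ and letting $k\to\infty$ turns the sum into an integral:
\begin{equation*}
k+1=\#\{\text{zeros of }u\text{ in }(0,L]\}\ \sim\ \frac{2}{\pi}\int_0^L\Big((\alpha_k^\pm m)^{-\frac12}+(\beta_k^\pm n)^{-\frac12}\Big)^{-1}dx,
\end{equation*}
which rearranges to the asserted $1\sim\frac{\pi k}{2}\big(\int_0^L(\cdots)^{-1}dx\big)^{-1}$; specialising $m\equiv n$ recovers Theorem~\ref{mainmn}.

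For the remainder under the $\gamma$-condition I would make the reduction to a single weight explicit. With $t=\beta_k^\pm/\alpha_k^\pm$ (bounded above and below on $\mathcal K_\theta$) one has $\big((\alpha_k m)^{-1/2}+(\beta_k n)^{-1/2}\big)^{-1}=\sqrt{\alpha_k}\,h(x)$ with $h=\big(m^{-1/2}+(tn)^{-1/2}\big)^{-1}$, so the claim is equivalent to a Weyl remainder $k=\frac{2\sqrt{\alpha_k}}{\pi}\int_0^L h\,dx+O(k^{\delta})$ for the zero–counting function of the Fu\v{c}\'{\i}k solution. Two errors enter: the arch-by-arch replacement of a true nodal length by its ``frozen weight'' value on a small cell, and the replacement of the resulting (nonuniform) Riemann sum of $h$ by $\int_0^L h$. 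Both are controlled by the defining bound $\int_I|h-h_I|\le C|I|^{\gamma}$, which quantifies the oscillation of $h$ on cells of a given size, together with the quantitative form of the first–eigenvalue estimate of \cite{Pi} on each nodal arch; summing these $\asymp k$ local contributions and optimising the size of the freezing cells against their number produces the admissible range $\delta\in[\frac1\gamma,1]$, the value $\delta=\frac1\gamma$ corresponding to freezing on the nodal partition itself. I expect this last summation to be the main obstacle: the preceding steps need only Sturm comparison and uniform continuity, whereas here one needs an arch–length estimate that is quantitative and valid for merely Morrey–Campanato weights, plus tight control of the cumulative error over all nodal arches — exactly the situation for which the $\gamma$-condition was introduced.
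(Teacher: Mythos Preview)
Your argument for the asymptotic statement is correct and takes a genuinely different, more elementary route than the paper. The paper fixes the ray $\beta=t\alpha$, introduces the spectral counting function $N(\lam,t,(0,L))$, and proves a bracketing additivity result (Theorem~\ref{prop_suma}) for $N$ under subdivision of the interval; this is the technical heart of their approach, needed precisely because the problem has no variational structure and Sturmian oscillation fails between \emph{different} Fu\v{c}\'{\i}k eigenfunctions. You sidestep all of this by never comparing two eigenfunctions: you fix the single solution attached to $(\alpha_k^\pm,\beta_k^\pm)$ and estimate each of its nodal arches via classical Sturm comparison on that arch, which is perfectly legitimate since on a single arch the equation is linear with one weight. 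Your Riemann-sum passage from arch lengths to the integral is then immediate, with the cone restriction $\mathcal K_\theta$ supplying the uniformity. What the paper's detour through $N(\lam,t,\cdot)$ buys is a clean, reusable machinery: once the bracketing and the constant-weight Lemma~\ref{lemasuarez} are in place, both the asymptotic (Theorem~\ref{ndelam}) and the remainder under the $\gamma$-condition (Theorem~\ref{campanato}) follow by the same template, and the translation back to $(\alpha_k^\pm,\beta_k^\pm)$ is a one-line change of variables.

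For the remainder term your sketch is in the right spirit but, as you yourself flag, the arch-by-arch summation under a Morrey--Campanato hypothesis on $h=(m^{-1/2}+(tn)^{-1/2})^{-1}$ is not fully justified: the $\gamma$-condition controls oscillation of $h$ on intervals, not of $m$ and $n$ separately, so your ``quantitative first-eigenvalue estimate on each nodal arch'' would have to be rephrased in terms of $h$ before the cell-size optimisation can go through. The paper avoids this difficulty because its bracketing is already stated for the counting function with the combined weight, so the $\gamma$-condition enters only through the Riemann-sum error $\sum_j\int_{I_j}|h-h_{I_j}|$ and the monotonicity of $N$ in the weight; no arch-level quantitative estimate is needed.
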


\begin{rem}
When $\alpha_k^\pm = \beta_k^\pm$, and $m=n$,  the pair of curves intersect at an eigenvalue $\lam_k$ of the
Sturm-Liouville problem \eqref{ecu2} with $s\equiv m$,  and we recover the well known
asymptotic formula of Weyl,
$$
\lam_k \sim  \left( \frac{ \pi k }{   \int_0^L m^{\frac{1}{2}}\, dx }\right)^2.
$$
\end{rem}

\bigskip

We divide the proof of Theorem   \ref{main}
 in several steps.  The first step is to fix a line $(\lam, t\lam)$ in the first quadrant
and to estimate the number of intersections with the curves in $\Sigma$ as a function of $\lambda$. To this end, we need to study
the eigenvalue  problem
\begin{equation}\label{ecux}
\begin{cases}
   - u''  =\lam (  m(x) u^+ - t  n(x) u^-  )& \qquad x\in [0,L] \\
   u(0)  =u(L)=0. &
\end{cases}
\end{equation}
These eigenvalues are related to the half-eigenvalues studied for instance in \cite{CCYZ14,
GR11}. Observe that only a positive multiple of a solution is itself a solution corresponding to
the same eigenvalue, if a negative multiple is a solution too, we will count this as a double
eigenvalue.

From the description of the  Fu{\v{c}}{\'{\i}}k  spectrum of problem \eqref{ecu}, the spectrum
of problem  \eqref{ecux} consists in a double sequence
\begin{equation} \label{autov_d}
0<\lam_{1,t}^\pm < \lam_{2,t}^\pm < \lam_{3,t}^\pm < \cdots < \lam_{2k-1,t}^\pm < \lam_{2k,t}^\pm \cdots \nearrow \infty
\end{equation}
for each fixed $t>0$. Each eigenvalue $\lam_{2,t}^\pm$ has a unique associated eigenfunction, normalized by
$\pm u'(0)=1$. The eigenfunction corresponding to $\lam_{k,t}^\pm$ has precisely $k$ nodal
domains on $(0,L)$, and $k+1$ zeros in $[0, L]$. Moreover, for $t=1$ we have
$\lam^+_{k,1}=\lam_{k,1}^-$.

We introduce the spectral counting function $N(\lam, t, (0,L))$, defined as
$$
N(\lam, t, (0,L)) = \# \{ k : \lam_{k,t}^+(0,L)\le \lam\} + \# \{ k : \lam_{k,t}^-(0,L)\le \lam\}.
$$
We emphasize  the dependence of the eigenvalues  on the interval, since in order to count
these eigenvalues, we use a bracketing argument comparing them with the eigenvalues in
subintervals. However, since the problem has no variational structure, we cannot use the
Dirichlet-Neumann bracketing of Courant \cite{CH}. Moreover, since the Sturmian oscillation
theory does not hold for Fu{\v{c}}{\'{\i}}k  eigenvalues, and it is not true that between two
zeros of an eigenfunction there is at least a zero of any eigenfunction corresponding to a
higher eigenvalue, we cannot use a different bracketing based on the number of zeros nor
the Sturm-Liouville theory that can be found in \cite{Pi1}, and we need to prove the following
theorem:

\begin{thm} \label{prop_suma}
Let $N(\lam, t, (0,L))$ be the spectral counting function of problem \eqref{ecux}  where $m$, $n$ are
 continuous functions in $[0,L]$,  bounded below by some some positive constant.
Let $c\in(0,L)$. Then for any fixed $t>0$,
$$N(\lam,t, (0,L))= N(\lam,t,(0,c)) +N(\lam,t,(c,L))+ O(1)$$
when $\lam\to \infty$.
\end{thm}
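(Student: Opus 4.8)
The plan is to establish the additivity of the counting function by a two-sided comparison, bounding $N(\lam,t,(0,L))$ both above and below by $N(\lam,t,(0,c))+N(\lam,t,(c,L))$ up to a bounded error. Since problem \eqref{ecux} is not variational, the usual Dirichlet–Neumann bracketing is unavailable; instead I would argue directly with the shooting/oscillation description of the half-eigenvalues. For fixed $t>0$, let $\varphi^\pm(x;\lam)$ be the solution of $-u''=\lam(mu^+-tnu^-)$ with $u(0)=0$, $\pm u'(0)=1$, and recall that $\lam_{k,t}^\pm$ is characterized by the condition that $\varphi^\pm(\,\cdot\,;\lam_{k,t}^\pm)$ has exactly $k+1$ zeros in $[0,L]$, the last one being at $x=L$. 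Equivalently, if we introduce the zero-counting function $Z(\lam,t,(a,b))$ = (number of zeros in $(a,b]$ of the solution of \eqref{ecux} on $(a,b)$ starting from a zero at $a$ with slope $\pm1$), then $N(\lam,t,(0,L))$ counts, with the two choices of initial slope sign, how many full nodal domains fit in $(0,L)$ at parameter $\lam$, so $N(\lam,t,(0,L)) = Z^+(\lam,t,(0,L)) + Z^-(\lam,t,(0,L)) + O(1)$, where the $O(1)$ accounts for the discrepancy between ``number of zeros'' and ``number of eigenvalues $\le \lam$'' (a boundary effect of size at most $2$).

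The heart of the matter is then a concatenation estimate for the zero count. Start the solution at $x=0$ with a prescribed slope sign; at the interior point $c$ the solution has some value $u(c)$ and some slope $u'(c)$, not necessarily a zero. The key observation is that after at most one more half-domain the solution on $(c,L)$ reaches a zero, and from that zero onward its oscillation is governed by the same equation as the canonically-started solution on $(c,L)$; conversely, deleting at most one half-domain near $c$ from the solution on $(0,L)$ produces a solution on $(0,c)$. Thus $Z^\pm(\lam,t,(0,L)) \le Z^+(\lam,t,(0,c)) + Z^-(\lam,t,(0,c)) + Z^+(\lam,t,(c,L)) + Z^-(\lam,t,(c,L)) + O(1)$ and a matching lower bound, because a solution on $(0,c)$ followed by a solution on $(c,L)$ (glued with a possible sign flip and a transitional half-domain at $c$) differs from an admissible solution on $(0,L)$ by $O(1)$ nodal domains. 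Summing the $\pm$ contributions and invoking the comparison $N = Z^+ + Z^- + O(1)$ on each of the three intervals gives the claim. The uniformity in $\lam$ of all these $O(1)$ terms is what must be checked: each is an integer bounded by a fixed constant (essentially $1$ or $2$), independent of $\lam$, because it only ever measures ``how far is the solution at $c$ from being a fresh zero,'' which costs at most one nodal domain regardless of how fast the solution oscillates.

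The main obstacle, and the step I would spend the most care on, is making the gluing argument precise in the absence of Sturmian comparison between distinct eigenfunctions: I cannot appeal to the usual interlacing of zeros, so I must work with the single half-eigenfunction as a solution of an ODE and track its zeros via a Prüfer-type angle $\theta(x)$ satisfying a first-order equation driven by $m$, $tn$, and $\lam$. The zero count on an interval is then $\lfloor \theta(b)/\pi \rfloor - \lceil \theta(a)/\pi \rceil + O(1)$, and additivity of the integral of $\theta'$ over $(0,c)\cup(c,L)$ is immediate; the only loss is in the floor/ceiling operations at the splitting point, which contribute the claimed $O(1)$. A secondary technical point is that the Prüfer angle for the Fučík problem must be defined piecewise (different ``speeds'' $\sqrt{\lam m}$ and $\sqrt{\lam t n}$ on the positivity and negativity sets of $u$), but since $m,n\ge c>0$ are continuous the angle is still strictly increasing and the monodromy over one full period is well controlled, so the argument goes through; I would state this monotonicity as a lemma and then the theorem follows by the bookkeeping above.
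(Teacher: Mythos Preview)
Your strategy is sound and in fact a bit more direct than the paper's. The paper does not work with the Dirichlet sub-problems alone: it introduces auxiliary problems on $(0,c)$ and $(c,L)$ with a \emph{Neumann} condition at the interior point $c$, and its key technical input is a separate lemma (Lemma~\ref{zeros}) establishing that the zero count of a shot solution is monotone in $\lambda$, proved by applying classical Sturm comparison on each nodal interval one at a time. The bracketing then proceeds by gluing the two sub-eigenfunctions at $c$ and comparing their total zero count with that of the global eigenfunction via this lemma. Your route sidesteps both the Neumann auxiliaries and the nodal-domain Sturm argument: identifying the half-eigenvalue count on $(a,b)$ with $\lfloor\theta(b)/\pi\rfloor$ for the Pr\"ufer angle shot from $a$, the whole theorem reduces to comparing the continued angle on $(c,L)$ with the angle of a fresh shot from $c$, which is governed by ODE uniqueness for equation~\eqref{fi}.

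One point deserves more care. You write that ``additivity of the integral of $\theta'$ over $(0,c)\cup(c,L)$ is immediate; the only loss is in the floor/ceiling operations.'' That would settle matters if $\theta'$ depended only on $x$ and $\lambda$, but in the Fu\v{c}\'{\i}k Pr\"ufer equation~\eqref{fi} the right-hand side depends on the sign of $\sin\theta$, so the solution restarted at $c$ with angle $0$ does \emph{not} satisfy $\theta_{\mathrm{restart}}(L)=\theta(L)-\theta(c)$. What actually closes the argument (and what your earlier ``after at most one more half-domain'' remark correctly anticipates) is the combination of two facts: solutions of~\eqref{fi} with ordered initial data at $c$ stay ordered on $[c,L]$ (uniqueness), and the right-hand side of~\eqref{fi} is $2\pi$-periodic in $\varphi$, so a restart shifted by $2\pi$ is again a solution. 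Sandwiching the continued angle between the restart and the restart plus $2\pi$ then gives the $O(1)$ bound. State that comparison explicitly as your lemma and the proof is complete; the floor/ceiling heuristic by itself does not carry it.
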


From Theorem \ref{prop_suma} we derive the following Weyl type result:

\begin{thm}\label{ndelam} Let $N(\lam, t, (0,L))$ be the spectral counting function on $(0,L)$ of problem
\eqref{ecux}, where $m$, $n$ are
 continuous functions  in $[0,L]$,  bounded below by some some positive constant. For any fixed $t>0$,
	$$
		N(\lam, t,(0,L))=\frac{4\sqrt{\lam}}{\pi} \int_0^L  \big(m^{-\frac{1}{2}}+(n t)^{-\frac{1}{2}} \big)^{-1} \, dx + o(\sqrt{\lam})
	$$
as $\lam\to \infty$.
\end{thm}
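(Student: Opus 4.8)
The plan is to combine the additivity Theorem~\ref{prop_suma} with an explicit computation of the counting function on intervals where the weights are \emph{constant}, via a standard sandwiching (bracketing) argument. First I would treat the model case: on an interval $I=(a,b)$ with constant weights $m\equiv \mu$, $n\equiv \nu$, the half-eigenvalue problem $-u''=\lambda(\mu u^+ - t\nu u^-)$ can be solved explicitly by concatenating half-periods of sines with frequencies $\sqrt{\lambda\mu}$ on the positive arcs and $\sqrt{\lambda t\nu}$ on the negative arcs. A solution with $k$ nodal domains exists exactly when the total length is an appropriate combination of half-periods, so one obtains
$$
N(\lambda,t,I) = \frac{4\sqrt{\lambda}}{\pi}\,|I|\,\bigl(\mu^{-\frac12}+(t\nu)^{-\frac12}\bigr)^{-1} + O(1)
$$
as $\lambda\to\infty$, where the $O(1)$ is uniform (bounded by a small absolute constant, coming from endpoint/parity effects and the factor $2$ for the $\pm$ families). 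This is the heart of the computation and the only place where the precise hyperbolic-curve formulas for constant coefficients enter.

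Next I would set up the bracketing. Fix $\varepsilon>0$. Partition $(0,L)$ into $M=M(\varepsilon)$ subintervals $I_1,\dots,I_M$ on each of which, by uniform continuity of $m$ and $n$ and the lower bound $c>0$, the oscillation of $m$ and of $n$ is at most $\varepsilon$. On each $I_j$ let $\mu_j^-,\mu_j^+$ (resp. $\nu_j^-,\nu_j^+$) be the min and max of $m$ (resp. $n$) over $I_j$. Since replacing the weights by smaller constants decreases each half-eigenvalue pointwise (monotonicity of the eigenvalues with respect to the weights, as in Rynne's results), one gets the monotone sandwich
$$
N_{\mathrm{low}}(\lambda,t,I_j) \le N(\lambda,t,I_j) \le N_{\mathrm{up}}(\lambda,t,I_j),
$$
where $N_{\mathrm{low}}$, $N_{\mathrm{up}}$ are the constant-coefficient counting functions with weights $(\mu_j^-,\nu_j^-)$ and $(\mu_j^+,\nu_j^+)$ respectively. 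Applying the model computation to each of the finitely many pieces, summing, and then invoking Theorem~\ref{prop_suma} $M-1$ times to write $N(\lambda,t,(0,L))=\sum_{j=1}^M N(\lambda,t,I_j)+O(1)$, I obtain
$$
\Bigl| N(\lambda,t,(0,L)) - \frac{4\sqrt{\lambda}}{\pi}\sum_{j=1}^M |I_j|\,\bigl((\mu_j)^{-\frac12}+(t\nu_j)^{-\frac12}\bigr)^{-1}\Bigr| \le C_\varepsilon \sqrt{\lambda}\cdot(\text{osc. error}) + C_M,
$$
where the oscillation error is $O(\varepsilon)$ uniformly because $x\mapsto (x^{-1/2}+(ty)^{-1/2})^{-1}$ is Lipschitz on the compact set $[c,\|m\|_\infty]\times[c,\|n\|_\infty]$ away from zero. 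Finally, the Riemann sum $\sum_j |I_j|\,((\mu_j)^{-1/2}+(t\nu_j)^{-1/2})^{-1}$ converges to $\int_0^L (m^{-1/2}+(tn)^{-1/2})^{-1}\,dx$ as the mesh goes to $0$, and since the $C_M$ term is $o(\sqrt\lambda)$ for fixed $M$, dividing by $\sqrt{\lambda}$, sending $\lambda\to\infty$ and then $\varepsilon\to 0$ gives the claimed $o(\sqrt{\lambda})$ asymptotics.

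The main obstacle is the model case together with the monotonicity step: one must be careful that the half-eigenvalue problem \eqref{ecux} genuinely reduces on constant-coefficient intervals to the explicit Fu\v{c}\'{\i}k formulas (accounting separately for the two sign patterns $\lambda_{k,t}^{\pm}$ and for whether $k$ is even or odd), and that the comparison $N_{\mathrm{low}}\le N\le N_{\mathrm{up}}$ is legitimate despite the absence of a variational structure — here one uses the shooting/Sturm-type description of the half-eigenvalues (the eigenfunction for $\lambda_{k,t}^{\pm}$ has exactly $k$ nodal domains, together with continuous monotone dependence of the nodal count on the weights) rather than a min-max principle. Once those two ingredients are in place, the assembly via Theorem~\ref{prop_suma} and the Riemann-sum limit is routine.
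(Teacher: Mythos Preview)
Your proposal is correct and follows essentially the same route as the paper: partition $(0,L)$ into finitely many subintervals, sandwich $N(\lambda,t,I_j)$ between constant-coefficient counting functions via monotonicity of the half-eigenvalues in the weights (the paper packages this as Lemma~\ref{lemapeso} and Lemma~\ref{lemasuarez}), apply the additivity Theorem~\ref{prop_suma}, and finish with a Riemann-sum/$\varepsilon$-argument. One small remark: applying Theorem~\ref{prop_suma} $M-1$ times produces an $O(M)$ error rather than $O(1)$, but since $M=M(\varepsilon)$ is fixed before sending $\lambda\to\infty$ this is harmless, and your later ``$C_M$'' notation shows you are aware of the dependence.
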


\bigskip

Finally, let us show that the error term in Theorem \ref{ndelam} can be improved for more regular weights satisfying the $\gamma$-condition
introduced in Definition \ref{gamacond}, following the ideas  in \cite{F-L}.

\newpage

\begin{thm}\label{campanato}  Suppose that $\big(m^{-\frac{1}{2}}+(n t)^{-\frac{1}{2}} \big)^{-1}$ satisfies the $\gamma$-condition for some $\gamma>1$.
	Then for each $t>0$ fixed and  for all $\delta \in [\frac{1}{\gamma},1]$, we have
	$$
		N(\lam,(0,L))=\frac{4\sqrt{\lam}}{\pi} \int_0^L  \big(m^{-\frac{1}{2}}+(n t)^{-\frac{1}{2}} \big)^{-1} \, dx + O(\lam^{\delta/2}).
	$$
\end{thm}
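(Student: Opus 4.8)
The plan is to follow the scheme of \cite{F-L}: cut $(0,L)$ into a growing number $M=M_\lambda$ of equal subintervals, control $N$ on each of them by comparison with the constant-coefficient model, and balance the accumulated local error against $M$. Write $\rho:=\big(m^{-\frac12}+(nt)^{-\frac12}\big)^{-1}$; by hypothesis $\rho\in L^{\gamma,1}(0,L)$ with $\gamma>1$ and $0<\rho_1\le\rho\le\rho_2<\infty$. First I would partition $(0,L)$ into $M$ subintervals $I_1,\dots,I_M$ of length $L/M$ and iterate Theorem \ref{prop_suma}. Since the $O(1)$ error produced by one interior cut can be taken uniform in the location of the cut (a routine strengthening of the proof of Theorem \ref{prop_suma}), cutting $M-1$ times costs only $O(M)$, so
$$N(\lambda,t,(0,L))=\sum_{j=1}^{M}N(\lambda,t,I_j)+O(M),\qquad\lambda\to\infty.$$

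Next I would establish, for each $j$, the local estimate
$$N(\lambda,t,I_j)=\frac{4\sqrt{\lambda}}{\pi}\int_{I_j}\rho\,dx+O\big(1+\sqrt{\lambda}\,|I_j|\,\omega_j\big),\qquad \omega_j:=\mathrm{osc}_{I_j}\,\rho.$$
When $m,n$ are constant the half-eigenvalues of \eqref{ecux} are explicit: consecutive zeros of the $\pm$-solution with the solution positive between them are at distance $\pi(\lambda m)^{-\frac12}$, and at distance $\pi(\lambda tn)^{-\frac12}$ when it is negative between them, so a full pair of consecutive bumps has length $\pi\lambda^{-\frac12}\rho^{-1}$ and $N(\lambda,t,J)=\frac{4\sqrt\lambda}{\pi}\rho\,|J|+O(1)$ on any interval $J$. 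For the lower bound on $I_j$, the pointwise inequality $\rho\ge\inf_{I_j}\rho$ forces $m\ge(\inf_{I_j}\rho)^2$ and $tn\ge(\inf_{I_j}\rho)^2$ there; by Sturmian comparison on the positive and on the negative bumps the genuine solution then oscillates at least as fast as the constant-coefficient one, so $N(\lambda,t,I_j)\ge\frac{4\sqrt\lambda}{\pi}(\inf_{I_j}\rho)|I_j|+O(1)$. For the upper bound I would pass to a modified Pr\"ufer angle: $N(\lambda,t,I_j)$ equals $\pi^{-1}$ times the total increment over $I_j$ of that angle for the two solutions, and because a single oscillation has length $O(\lambda^{-\frac12})$ — over which the continuous functions $m$, $n$, hence $\rho$, are almost constant — the effective angular velocity averages to $\sqrt\lambda\,\rho(x)$ with an error governed by $\omega_j$; integrating gives $N(\lambda,t,I_j)\le\frac{4\sqrt\lambda}{\pi}(\sup_{I_j}\rho)|I_j|+O(1)$. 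Together these two bounds give the local estimate.

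Finally I would sum over $j$ and optimize. The main terms reassemble $\frac{4\sqrt\lambda}{\pi}\int_0^L\rho$ exactly, the $O(1)$'s give $O(M)$, and the oscillation terms give $O\big(\sqrt\lambda\sum_j|I_j|\omega_j\big)$. Here the $\gamma$-condition enters through the characterization of the Morrey--Campanato space: since $\gamma>1$, $\rho\in L^{\gamma,1}(0,L)$ forces $\rho\in C^{0,\gamma-1}[0,L]$, whence $\omega_j\le C|I_j|^{\gamma-1}=C(L/M)^{\gamma-1}$ and $\sum_j|I_j|\omega_j\le CL^{\gamma}M^{1-\gamma}$, so that
$$N(\lambda,t,(0,L))=\frac{4\sqrt\lambda}{\pi}\int_0^L\rho\,dx+O\big(M+\sqrt\lambda\,M^{1-\gamma}\big).$$
Choosing $M=\lceil\lambda^{\delta/2}\rceil$ with $\delta\in[\tfrac1\gamma,1]$ makes $\sqrt\lambda\,M^{1-\gamma}=O\big(\lambda^{(1+\delta-\delta\gamma)/2}\big)=O(\lambda^{\delta/2})$, the exponent inequality $1+\delta-\delta\gamma\le\delta$ being exactly $\delta\gamma\ge1$; also $M=O(\lambda^{\delta/2})=O(\sqrt\lambda)$, so the remainder is not swamped by the number of cuts. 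This yields the claimed $O(\lambda^{\delta/2})$.

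The hard part will be the upper bound in the local estimate. Because the hypothesis regularizes only the effective weight $\rho$, not $m$ and $n$ separately, and because an upper bound on $\rho$ gives no upper bound on $m$ or on $n$, one cannot dispose of it by monotonicity of $N$ in the weights, the way the lower bound is handled; the comparison has to be carried out at the level of the rotation of the solution, showing that the fraction of each oscillation spent in the positive versus the negative region self-adjusts so that the effective angular speed is $\sqrt\lambda\,\rho$, with an error measured by $\mathrm{osc}_{I_j}\rho$ and not by the (a priori worse) moduli of continuity of $m$ and $n$. Making this quantitative is exactly where the technique of \cite{F-L} is needed. A secondary technical point, already flagged above, is to verify that the splitting constant in Theorem \ref{prop_suma} is independent of the cut point, which is what licenses iterating it over a partition whose cardinality grows with $\lambda$.
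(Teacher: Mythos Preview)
Your overall plan---partition, iterate the bracketing Theorem~\ref{prop_suma} (whose proof already produces the explicit uniform constants $-9$ and $11$, so no strengthening is needed), prove a local estimate on each piece, and optimize in $M$---is the paper's scheme, and your passage through the Campanato$\to$H\"older embedding in place of the raw integral inequality is a harmless variant.

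The genuine gap is in your lower local estimate. From $m,\,tn\ge(\inf_{I_j}\rho)^2$ the comparison problem has constant weights $m_0=tn_0=(\inf_{I_j}\rho)^2$, and its effective density is $\rho_0=\big(m_0^{-1/2}+(tn_0)^{-1/2}\big)^{-1}=\tfrac12\inf_{I_j}\rho$; Sturmian comparison therefore yields only $N(\lambda,t,I_j)\ge\frac{2\sqrt\lambda}{\pi}(\inf_{I_j}\rho)\,|I_j|+O(1)$, which is off by a factor $2$ from the main term you need. The asymmetry you identify for the upper bound---that a bound on $\rho$ alone gives no control on $m$ and $n$ individually---is in fact already present in the lower bound: a lower bound on $\rho$ does force $m,tn\ge\rho^2$, but these crude one-sided inequalities discard the balancing between positive and negative bumps that makes $\rho$, rather than $\sqrt m$ or $\sqrt{tn}$ separately, the correct local density. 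In particular you cannot reach the sharp constant through Lemma~\ref{lemasuarez}, whose inputs are $\underline m,\overline m,\underline n,\overline n$ and whose output therefore depends on the moduli of continuity of $m$ and $n$, not on that of $\rho$. The paper sidesteps this by comparing the counting function on $I_j$ not with a constant problem built from $\inf m,\inf n$ or from $\inf\rho$, but with the one attached to the \emph{mean value} on $I_j$, and then bounding the discrepancy by a term whose leading part is $\frac{4\sqrt\lambda}{\pi}\int_{I_j}|\rho-\rho_{I_j}|$; the Morrey--Campanato hypothesis applies directly to that integral. Getting this comparison-with-the-mean step right, rather than the upper/lower dichotomy you flag, is where the actual work lies.
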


Let us observe that the arguments here can be easily modified and applied to problem
\eqref{ecu-1}, and also for other homogeneous boundary conditions. However, those
arguments are strongly dependent on the one dimensional nature of the problem, and we
cannot expect an easy generalization to higher dimensional problems where, on the other
hand, several strange phenomena can occur.

\bigskip

The paper is organized as follows. In Section 2 we review some necessary facts about the
eigenvalues and we prove the bracketing argument in  Theorem \ref{prop_suma}. We prove Theorems \ref{main}, \ref{ndelam} and \ref{campanato}
in Section 3. In Section 4  we present some numerical experiments.

\section{A fixed line}

Let us study the distribution of the intersection between $\Sigma$ and a line $\beta=t
\alpha$ through the origin for a fixed $t>0$. Let us call $\alpha=\lam$ and $\beta=t\alpha=t
\lam$, and let us describe the spectrum of problem \eqref{ecux}.

We consider first the case of constant weights $m$ and $n$. Hence, the intersection $\Sigma\cap \{( \lam, t\lam) : \lam >0\}$ can be computed explicitly:

\begin{lema}
Let $\{\lam_k^\pm\}_{k\in\N}$ be the eigenvalues of \eqref{ecux} with constant weights in
$(0,L)$. Then, for each fixed $t>0$, we have
\begin{align} \label{autovcte}
\begin{split}
&(\lam_{k,t}^\pm)^\frac{1}{2}=\frac{k\pi}{2L} \left(m^{-\frac{1}{2}}+(nt)^{-\frac{1}{2}} \right), \qquad \qquad \qquad \qquad k \mbox{ even}\\
&(\lam_{k,t}^+)^\frac{1}{2}=\frac{\pi}{2L} \left((k+1)m^{-\frac{1}{2}}+(k-1)(nt)^{-\frac{1}{2}} \right), \qquad k \mbox{ odd}\\
&(\lam_{k,t}^-)^\frac{1}{2}=\frac{\pi}{2L} \left((k-1)m^{-\frac{1}{2}}+(k+1)(nt)^{-\frac{1}{2}} \right), \qquad k \mbox{ odd.}
\end{split}
\end{align}
\end{lema}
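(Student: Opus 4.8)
The plan is to exploit the explicit solvability of the two linear equations governing $u$ on its positive and negative parts. Fix $t>0$ and suppose $u$ is an eigenfunction of \eqref{ecux} associated with $\lambda_{k,t}^\pm$, so that $u$ has $k+1$ zeros $0=x_0<x_1<\dots<x_k=L$ and $k$ nodal domains. Since $m$ and $n$ are constants, on each interval where $u>0$ the equation reads $-u''=\lambda m\,u$; a solution vanishing at both endpoints of an interval and keeping a constant sign on its interior is a single half-period of $\sin(\sqrt{\lambda m}\,\cdot)$, so that interval has length exactly $\pi/\sqrt{\lambda m}$. Likewise, writing $v=-u$ on an interval where $u<0$ gives $-v''=\lambda t n\,v$ with $v>0$, which forces that interval to have length $\pi/\sqrt{\lambda t n}$. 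Hence every positive nodal domain of $u$ has length $\pi/\sqrt{\lambda m}$ and every negative one has length $\pi/\sqrt{\lambda t n}$, irrespective of the amplitudes.

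Next I would read off the combinatorics of the nodal domains. Since $m,n>0$ are continuous, uniqueness for the initial value problem shows every zero of $u$ is simple, so $u$ changes sign at each interior zero and the nodal domains alternate in sign. With the normalization $\pm u'(0)=1$, the first nodal domain of the $+$ eigenfunction is positive and that of the $-$ eigenfunction is negative. Therefore the eigenfunction of $\lambda_{k,t}^+$ has $\lceil k/2\rceil$ positive and $\lfloor k/2\rfloor$ negative domains, and the one of $\lambda_{k,t}^-$ has the reversed counts. Summing the lengths and using $\sum_{j=1}^{k}(x_j-x_{j-1})=L$ gives, for $\lambda=\lambda_{k,t}^+$,
$$L=\frac{\lceil k/2\rceil\,\pi}{\sqrt{\lambda m}}+\frac{\lfloor k/2\rfloor\,\pi}{\sqrt{\lambda t n}},$$
together with the mirror identity for $\lambda_{k,t}^-$. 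Splitting according to the parity of $k$ and solving for $\sqrt{\lambda}$ produces exactly the three formulas in \eqref{autovcte}.

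It remains to check that these numbers are genuinely the eigenvalues, in the order \eqref{autov_d}. For existence one runs the shooting construction in reverse: start at $x=0$ with slope $\pm1$, concatenate the alternating half-bumps of lengths $\pi/\sqrt{\lambda m}$ and $\pi/\sqrt{\lambda t n}$, matching the (continuous and nonzero) derivative at each junction; for the value of $\lambda$ found above this $C^1$ function vanishes precisely at $x=L$, so it is an eigenfunction with the prescribed nodal pattern. Conversely, by the first paragraph any eigenfunction is such a concatenation, so \eqref{autovcte} lists all the eigenvalues; their strict monotonicity in $k$ and the interlacing in \eqref{autov_d} then follow directly from the formulas. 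The main obstacle is purely bookkeeping: keeping track of which junction carries which sign of the slope, and hence the asymmetric parity split between the $+$ and $-$ branches for odd $k$; the rest is an elementary computation.
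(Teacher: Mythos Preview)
Your argument is correct and is precisely the computation underlying the explicit Fu\v{c}\'{\i}k curves quoted in the Introduction; the paper simply omits the proof and refers to those formulas, whereas you have rederived them directly by measuring the half-periods $\pi/\sqrt{\lambda m}$ and $\pi/\sqrt{\lambda t n}$ on each nodal domain and summing. There is no substantive difference in method---your version just makes the standard derivation explicit instead of citing it.
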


We omit the proof since it follows from the explicit formula for the Fu{\v{c}}{\'{\i}}k
eigenvalues with constant weights given in the Introduction.

\bigskip
We are interested now in the spectral counting function $N(\lam, t, (0,L))$ for any fixed
$t>0$. In order to prove Theorem \ref{prop_suma}, we review the characterization of the
eigenvalues in terms of the Pr\"ufer angle.

We introduce two auxiliary functions $\rho$, $\varphi$ and we propose the following Pr\"ufer
type transformation
\begin{align} \label{aaaa}
  \begin{cases}
    u(x) &=\sqrt{\lam m(x)} \sin( \varphi(x))^+-  \sqrt{\lam t n(x)}\sin( \varphi(x))^-,\\
    u'(x)&= \rho(x) \cos( \varphi(x)),
  \end{cases}
\end{align}
and an straightforward computation shows that $\rho(x)$ and $\varphi(x)$ satisfy the system
of ordinary differential equations
\begin{align}\label{fi}
\varphi'(x)=
\begin{cases}
\sqrt{\lam m(x)} + \frac{1}{2}\frac{m'(x)}{m(x)}\cos(\varphi(x)) \sin(\varphi(x)) &\qquad \mbox{if } \sin(\varphi(x))\geq 0\\
\sqrt{\lam n(x)} + \frac{1}{2}\frac{n'(x)}{n(x)}\cos(\varphi(x)) \sin(\varphi(x))  &\qquad \mbox{if } \sin(\varphi(x))< 0,
\end{cases}
\end{align}

\begin{align}
\rho'(x)=
\begin{cases}
\frac{1}{2}\frac{m'(x)}{m(x)}\rho(x) \sin^2(\varphi(x)) &\qquad \mbox{if } \sin(\varphi(x))\geq 0\\
\frac{1}{2}\frac{n'(x)}{n(x)}\rho(x) \sin^2(\varphi(x)) &\qquad \mbox{if } \sin(\varphi(x))<0.
\end{cases}
\end{align}

Now, a shooting argument enable us to compute the eigenvalues since the corresponding
eigenfunctions have a given number of zeros. The  zeros of $u$ depend on the zeros of
$\sin(\varphi(x))$ since the function $\rho$ is strictly positive. Whenever $\varphi(x)$ is an
integer multiple of $\pi$, we have $\varphi'(x)>0$. Hence, we have the following
characterization of the eigenvalues:
$$\lam_k^+=\min\{\lam\,:\, \varphi(\lam, 0) =0, \varphi(\lam,L)=k\pi \}, \qquad \quad$$
$$\lam_{k}^-=\min\{\lam\,:\, \varphi(\lam, 0) =\pi,  \varphi(\lam,L)=(k+1)\pi  \}.$$

\bigskip

The next Lemma is a version of the well-known Sturm's Comparison Theorem for the
half-eigenvalues of \eqref{ecux}, and it is an easy consequence of the previous
characterization of eigenvalues in terms of the Pr\"ufer transformation, see Theorem 5.3 in
\cite{RYN}.

\begin{lema}\label{lemapeso} Let $m_i$, $n_i$ $i=1,2$ positive and continuous weights such that $m_1\geq m_2$ and $n_1\geq n_2$, and let us call $\lam_{k,t}^\pm(m_i,n_i)$ the corresponding half-eigenvalues of problem \eqref{ecux}. Then $\lam_{k,t}(m_1,n_1)\leq \lam_{k,t}(m_2,n_2)$.
\end{lema}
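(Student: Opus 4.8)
The plan is to prove the monotonicity through a comparison of Pr\"ufer angles, but using the \emph{standard} (unmodified) Pr\"ufer transformation rather than the one in \eqref{aaaa}. The reason to change is that the angle $\varphi$ of \eqref{fi} carries the terms $\tfrac12\tfrac{m'}{m}\cos\varphi\sin\varphi$ and $\tfrac12\tfrac{n'}{n}\cos\varphi\sin\varphi$, whose signs are uncontrolled; hence two such angles built from ordered weights cannot be compared pointwise, since the logarithmic-derivative terms $\tfrac{m_1'}{m_1}$ and $\tfrac{m_2'}{m_2}$ are unrelated even when $m_1\ge m_2$. Because the zeros of $u$ — and therefore the eigenvalue characterization — depend only on the vanishing of $u$ and are independent of the amplitude normalization, I may replace \eqref{aaaa} by $u=r\sin\theta$, $u'=r\cos\theta$ with $r>0$. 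A direct computation then gives the clean equation $\theta'=\cos^2\theta+\lam\,w(x,\theta)\sin^2\theta$, where $w(x,\theta)=m(x)$ when $\sin\theta\ge 0$ and $w(x,\theta)=t\,n(x)$ when $\sin\theta<0$; at integer multiples of $\pi$ one has $\theta'=\cos^2\theta>0$, so $\theta$ increases strictly through them, and the zeros of $u$ in $[0,L]$ are exactly the points where $\theta$ is an integer multiple of $\pi$.

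Next I would record the two properties of the map $\lam\mapsto\theta(L;\lam)$, where $\theta(\cdot\,;\lam)$ solves the previous equation with $\theta(0)=0$ for the $+$ branch and $\theta(0)=\pi$ for the $-$ branch (these initial angles encode $\pm u'(0)=1$). The right-hand side $F(x,\theta,\lam)=\cos^2\theta+\lam\,w(x,\theta)\sin^2\theta$ is continuous in $(\theta,\lam)$ — the jump of $w$ at $\sin\theta=0$ is killed by the factor $\sin^2\theta$ — and locally Lipschitz in $\theta$, so continuous dependence makes $\theta(L;\lam)$ continuous in $\lam$; differentiating in $\lam$ shows $\partial_\lam\theta$ solves a linear equation with nonnegative forcing $w\sin^2\theta\ge 0$, so $\theta(L;\lam)$ is increasing in $\lam$. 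The characterization stated before the lemma then reads: $\lam_{k,t}^+(m,n)$ is the unique $\lam$ with $\theta(L;\lam)=k\pi$, and $\lam_{k,t}^-(m,n)$ the unique $\lam$ with $\theta(L;\lam)=(k+1)\pi$.

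The heart of the argument is a one-line differential inequality. Fix $\lam>0$ and $t>0$, and let $\theta_i$ be the angle built from $(m_i,n_i)$ with the same initial value. Since $m_1\ge m_2$, $n_1\ge n_2$ and $\sin^2\theta\ge 0$, the right-hand sides satisfy $F_1(x,\theta,\lam)\ge F_2(x,\theta,\lam)$ for all $(x,\theta)$: on $\{\sin\theta\ge 0\}$ the difference is $\lam(m_1-m_2)\sin^2\theta\ge 0$, and on $\{\sin\theta<0\}$ it is $\lam t(n_1-n_2)\sin^2\theta\ge 0$. As $\theta_1(0)=\theta_2(0)$ and $F_2$ is Lipschitz in $\theta$ on the (bounded) relevant range, the standard comparison principle for scalar differential inequalities yields $\theta_1(x)\ge\theta_2(x)$ on $[0,L]$, so in particular $\theta_1(L;\lam)\ge\theta_2(L;\lam)$ for every $\lam$.

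To conclude I would combine the two ingredients for each branch. Set $\lam^\ast=\lam_{k,t}^\pm(m_1,n_1)$, so that $\theta_1(L;\lam^\ast)$ equals the target level ($k\pi$ or $(k+1)\pi$). By the pointwise inequality $\theta_2(L;\lam^\ast)\le\theta_1(L;\lam^\ast)$ equals that level; since $\lam\mapsto\theta_2(L;\lam)$ is increasing and reaches the level exactly at $\lam=\lam_{k,t}^\pm(m_2,n_2)$, we obtain $\lam_{k,t}^\pm(m_2,n_2)\ge\lam^\ast=\lam_{k,t}^\pm(m_1,n_1)$, which is the claim. The only genuine obstacle is the one flagged at the start — that the transformation \eqref{aaaa} is ill-suited to a monotonicity comparison — and it is removed once and for all by passing to the standard Pr\"ufer angle, whose zeros coincide with those of $u$; everything that remains is the textbook comparison and $\lam$-monotonicity for scalar first-order equations.
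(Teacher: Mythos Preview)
Your proof is correct. The paper does not actually supply a proof of this lemma but defers to Theorem~5.3 of \cite{RYN}, calling it an easy consequence of the Pr\"ufer characterization; your argument via the standard Pr\"ufer angle $u=r\sin\theta$, $u'=r\cos\theta$ and the scalar comparison principle is precisely the mechanism behind that cited result, and your observation that the modified transformation \eqref{aaaa}--\eqref{fi} is ill-suited to a direct comparison (because of the uncontrolled $m'/m$, $n'/n$ terms) is well taken.
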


As a consequence, we have the following bound for the Fu{\v{c}}{\'{\i}}k  eigenvalues with
two weights in terms of the eigenvalues of a weighted problem:

\begin{lema}  Let $m$, $n$ be positive and continuous weights and let us call $\lam_{k,t}^\pm(m,n)$ the corresponding half-eigenvalues of problem \eqref{ecux}. Then $\lam_{k,t}(m,n)\leq \mu_{k}$, where $\mu_k$ is the
$k$-th eigenvalue of the problem:
 $$
 -v'' = \mu \frac{mnt}{m+nt}v,
 $$
with zero Dirichlet boundary conditions.
\end{lema}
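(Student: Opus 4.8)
The plan is to compare the half-eigenvalues $\lambda_{k,t}^\pm(m,n)$ of the non-variational problem \eqref{ecux} with the eigenvalues $\mu_k$ of the genuinely self-adjoint weighted Sturm--Liouville problem $-v''=\mu\,\tfrac{mnt}{m+nt}\,v$ with Dirichlet boundary conditions. The key algebraic observation is the elementary pointwise inequality
\begin{equation*}
\frac{mnt}{m+nt}=\bigl(m^{-1}+(nt)^{-1}\bigr)^{-1}\le \min\{m,\,nt\}\le m,\qquad
\frac{mnt}{m+nt}\le nt,
\end{equation*}
so the harmonic-mean weight lies below both $m$ and $nt$.

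First I would use the Pr\"ufer characterization recalled above together with Lemma \ref{lemapeso}. The point is that $\mu_k$, being a classical Dirichlet eigenvalue of a weighted problem, is itself a Fu\v{c}\'ik-type half-eigenvalue for the \emph{symmetric} choice of weights $m=n=w$ with $w:=\tfrac{mnt}{m+nt}$ and the slope parameter $t=1$: for such symmetric data the two branches $\lambda_{k,1}^+$ and $\lambda_{k,1}^-$ coincide with the ordinary $k$-th eigenvalue of $-v''=\mu\, w\, v$, as noted after \eqref{autov_d}. Thus it suffices to compare problem \eqref{ecux} with weights $(m,n)$ at parameter $t$ against problem \eqref{ecux} with weights $(w,w)$ at parameter $1$. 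In the Pr\"ufer equation \eqref{fi}, the relevant coefficient multiplying $\sqrt\lambda$ is $\sqrt{m(x)}$ on the set where $\sin\varphi\ge 0$ and $\sqrt{t\, n(x)}$ where $\sin\varphi<0$; since $w\le m$ and $w\le tn$ pointwise, replacing $(m,tn)$ by $(w,w)$ can only decrease $\varphi'$ at each $x$ (the oscillatory term is handled exactly as in Lemma \ref{lemapeso}/Theorem 5.3 of \cite{RYN}, being a total-variation-type perturbation that does not affect the count of zeros at the scale $\sqrt\lambda$). Hence the Pr\"ufer angle for the $(w,w)$ problem reaches the target value $k\pi$ (or $(k+1)\pi$ for the $\pm$ branch) no sooner in $\lambda$ than that for the $(m,tn)$ problem, which by the min-characterization of the eigenvalues gives $\lambda_{k,t}^\pm(m,n)\le \mu_k$.

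The main obstacle is making precise the comparison in the presence of the first-order oscillatory terms $\tfrac12\tfrac{m'}{m}\cos\varphi\sin\varphi$ in \eqref{fi}: naively, $\varphi'$ for the $(m,tn)$ problem is not pointwise larger than $\varphi'$ for the $(w,w)$ problem because these bounded perturbation terms have no definite sign and involve different weights. The clean way around this is to not compare $\varphi'$ directly but to invoke Lemma \ref{lemapeso} as a black box: that lemma already establishes monotonicity $\lambda_{k,t}^\pm(m_1,n_1)\le\lambda_{k,t}^\pm(m_2,n_2)$ whenever $m_1\ge m_2$, $n_1\ge n_2$, with the oscillatory terms absorbed in its proof. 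Applying it with $(m_1,n_1)=(m,n)$ and $(m_2,n_2)=(w,\,w/t)$ — so that $t n_2=w$ — yields $\lambda_{k,t}^\pm(m,n)\le\lambda_{k,t}^\pm(w,w/t)$, and the right-hand side equals $\mu_k$ because at these weights the problem \eqref{ecux} becomes $-u''=\lambda\,w\,(u^+-u^-)=\lambda\, w\, u$, i.e. exactly the weighted eigenvalue problem defining $\mu_k$. This completes the proof; the only thing to verify carefully is the identification $tn_2=w$ and that $w$ is positive and continuous (immediate from $m,n$ positive continuous and bounded below), so Lemma \ref{lemapeso} applies.
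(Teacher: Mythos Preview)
Your proposal is correct and follows essentially the same route as the paper: both establish the pointwise bound $w:=\tfrac{mnt}{m+nt}\le\min\{m,\,nt\}$ and then apply Lemma~\ref{lemapeso} to compare $\lambda_{k,t}^\pm(m,n)$ with the half-eigenvalues of the symmetric problem, which coincide with $\mu_k$. Your careful choice $(m_2,n_2)=(w,\,w/t)$ is in fact cleaner than the paper's $(w,w)$, which appears to be a small slip since the identification with $\mu_k$ indeed requires $t\,n_2=w$, exactly as you observe.
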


\begin{proof}
We have the following inequalities,
$$
\frac{mnt}{m+nt}\le\frac{mnt}{nt} = m
$$
$$
\frac{mnt}{m+nt}\le\frac{mnt}{m} = nt
$$
and by Lemma \ref{lemapeso}, we have
$$
\lam_{k,t}^\pm(m, n) \le \lam_{k,t}^\pm\left(\frac{mnt}{m+nt},\frac{mnt}{m+nt}\right),
$$
which coincides with $\mu_k$.
\end{proof}

\bigskip
A critical step in the proof of Theorem \ref{prop_suma} is the following Lemma, which is a
consequence of the Sturmian oscillation theory in classical eigenvalue problems:

\begin{lema}\label{zeros}
 Let $\lam>\lam_{k,t}^+$  (resp.,  $\lam>\lam_{k,t}^-$), and let $\varphi(\lam, x)$ corresponding to a solution of
equation \eqref{ecux} satisfying $u(0)=0$ and $u(0^+)>0$ (resp., $u(0^+)<0$). Then
$\varphi(\lam,L) > k\pi$ (resp., $\varphi(\lam,L) > (k+1)\pi$).
\end{lema}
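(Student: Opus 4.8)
The plan is to read off the statement from the Pr\"ufer-angle characterization of the half-eigenvalues recalled above, combined with a strict monotonicity of $\lam\mapsto\varphi(\lam,L)$ with respect to the parameter. First I would note that the normalization in \eqref{aaaa} forces the solution in the statement to have $\varphi(\lam,0)=0$ in the case $u(0^+)>0$, and $\varphi(\lam,0)=\pi$ in the case $u(0^+)<0$: the sign of $u'(0)=\rho(0)\cos\varphi(0)$ equals the sign of $\cos\varphi(0)$ since $\rho>0$, while the sign of $u$ just to the right of $0$ equals the sign of $\sin\varphi(0^+)$. Hence the eigenfunction under consideration is exactly the one whose value $\varphi(\lam,L)$ is matched to $k\pi$ (resp.\ $(k+1)\pi$) in the characterization $\lam_{k,t}^+=\min\{\lam:\varphi(\lam,0)=0,\ \varphi(\lam,L)=k\pi\}$ (resp.\ $\lam_{k,t}^-=\min\{\lam:\varphi(\lam,0)=\pi,\ \varphi(\lam,L)=(k+1)\pi\}$).

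The heart of the argument is the following monotonicity claim, which is the Sturm comparison principle applied with respect to $\lam$: if $0<\lam_1<\lam_2$ and $\varphi(\lam_1,0)=\varphi(\lam_2,0)$, then $\varphi(\lam_1,x)<\varphi(\lam_2,x)$ for every $x\in(0,L]$. Writing $\varphi_i=\varphi(\lam_i,\cdot)$, $g=\varphi_2-\varphi_1$ and denoting by $F(\lam,x,\varphi)$ the right-hand side of \eqref{fi}, the key observation is that $F$ depends on $\lam$ only through the term $\sqrt{\lam m(x)}$ or $\sqrt{\lam n(x)}$, so that for the \emph{same} $(x,\varphi)$ one has $F(\lam_2,x,\varphi)-F(\lam_1,x,\varphi)=\sqrt{\lam_2\,w(x)}-\sqrt{\lam_1\,w(x)}>0$, where $w(x)\in\{m(x),n(x)\}$ is selected by the sign of $\sin\varphi$ and $w\ge c>0$. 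In particular the second, $\lam$-independent term of \eqref{fi} plays no role once the two angles agree. Since $g(0)=0$ and $g'(0^+)>0$, $g$ is positive on some initial interval; and if $x_1\in(0,L]$ were the smallest point with $g(x_1)=0$, then $g'(x_1^-)\le 0$, whereas $g'(x_1)=F(\lam_2,x_1,\psi)-F(\lam_1,x_1,\psi)>0$ with $\psi:=\varphi_1(x_1)=\varphi_2(x_1)$, a contradiction. This establishes the claim, hence $\varphi(\lam,L)$ is strictly increasing along any family with $\varphi(\lam,0)$ held fixed.

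To conclude, for $\lam>\lam_{k,t}^+$ we use $\varphi(\lam,0)=0=\varphi(\lam_{k,t}^+,0)$ together with $\varphi(\lam_{k,t}^+,L)=k\pi$ from the characterization, so the monotonicity gives $\varphi(\lam,L)>\varphi(\lam_{k,t}^+,L)=k\pi$; the case $\lam>\lam_{k,t}^-$ is identical with $\varphi(\lam,0)=\pi$ and $\varphi(\lam_{k,t}^-,L)=(k+1)\pi$.

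I expect the only delicate point to be the behaviour of the vector field $F(\lam,x,\varphi)$ at the angles where $\varphi$ is an integer multiple of $\pi$, since there the weight switches between $m$ and $n$ and $F$ is discontinuous in $\varphi$. The remedy is that at those angles the second term of \eqref{fi} vanishes and $\varphi'=\sqrt{\lam\,w(x)}>0$, so each trajectory crosses every multiple of $\pi$ transversally and exactly once; therefore the set of $x$ at which $\varphi_1$ or $\varphi_2$ hits $\pi\mathbb Z$ is finite, and the first-crossing point $x_1$ above can either be taken outside this finite set or handled with one-sided derivatives, leaving the comparison computation intact. This transversality is the same fact that makes the eigenvalue characterization and Lemma~\ref{lemapeso} work (cf.\ Theorem~5.3 in \cite{RYN}), so no genuinely new input is required.
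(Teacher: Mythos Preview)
Your argument is correct, but it follows a genuinely different route from the paper's. The paper does \emph{not} prove monotonicity of $\lambda\mapsto\varphi(\lambda,L)$ at the level of the Pr\"ufer ODE. Instead it works directly with zeros: it fixes the nodal set $\{0,x_1,\dots,x_{k-1},L\}$ of the eigenfunction at $\lambda_{k,t}^+$ and observes that on each nodal interval the equation is a \emph{linear} second-order ODE with a single weight ($m$ or $n$), so classical Sturm comparison applies verbatim and forces the solution for $\lambda>\lambda_{k,t}^+$ to place a zero strictly inside $(0,x_1)$, then strictly inside $(x_1,x_2)$, and so on, yielding $k$ interior zeros before $L$. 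Your approach, by contrast, stays in the Pr\"ufer-angle picture and proves the stronger statement that $\varphi(\lambda,L)$ is strictly increasing in $\lambda$; the price is having to argue around the discontinuity of the right-hand side of \eqref{fi} at integer multiples of $\pi$, which you handle correctly via transversality. The paper's nodal-domain argument sidesteps this discontinuity entirely (each comparison lives in a region where only one weight is active), while your route is more self-contained and recovers the full monotonicity that underlies the eigenvalue characterization and Lemma~\ref{lemapeso}.
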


\begin{proof}
Let us consider only the case $u(0^+)>0$, the other one is similar. Let us denote by $\{0,x_1,
\cdots, x_{k-1}, L\}$ the zeros of an eigenfunction corresponding to $\lam_{k,t}^+$.

In the first nodal domain $(0,x_1)$ we can apply the classical Sturmian theory, so the
solution $u$ has a first zero at $0$, and another zero $y_1< x_1$.

Now, the next zero $y_2$ must be located before $x_2$, or the second nodal domain starting
in $y_1$ strictly contains the interval $(x_1,x_2)$. However, this is not possible, since
$\lam_k^+<\lam$ and the Sturm-Liouville theory implies that $u$ must have a zero between
$x_1$ and $x_2$.

In much the same way, the nodal domain starting at $y_2<x_2$ cannot contain the full
interval $(x_2, x_3)$, and applying this argument repeatedly, we obtain that $y_k<L$.

Finally, since $\varphi$ cannot decrease at a multiple of $\pi$, we have $\varphi(\lam, L)>k
\pi$ and the Lemma is proved.
\end{proof}

\begin{proof}[Proof of Theorem \ref{prop_suma}]

Let us consider the following auxiliary eigenvalue problems in $(0,c)$ and $(c,L)$, with the
original boundary conditions in $0$ and $L$, and Neumann boundary conditions at $c$:

\begin{align} \label{pp1}
\begin{split}
\begin{cases}
&- u'' =\mu  (m(x) u^+ - t n(x) u^- ) \qquad
  x\in (0,c)\\
&u(0)=u'(c)=0,
\end{cases}
\end{split}
\end{align}

\begin{align} \label{pp2}
\begin{split}
\begin{cases}
&- u'' =\mu  (m(x) u^+ - t n(x) u^- ) \qquad
  x\in  (c,L)\\
&u'(c)= u(L)=0.
\end{cases}
\end{split}
\end{align}

We have two double sequences of eigenvalues $\{\mu_{k,t}^{\pm}(0,c)\}_{k\ge1}$,
$\{\mu_{k,t}^{\pm}(c,L)\}_{k\ge1}$ corresponding to problems \eqref{pp1} and \eqref{pp2}.

Let us fix  $\lam$, and there exists some integer $n$ such that
 $$\lam_{n-1,t}^-<\lam_{n,t}^+ \le
\lam <\lam_{n+1,t}^+<\lam_{n+2,t}^-.$$
Moreover, let us note that there exist an even integer $j$ such that $\mu_{j,t}^+(0,c) \le\lam < \mu_{j+2,t}^+(0,c)$, and an integer
 $h$ such that $\mu_{h,t}^+(c,L)\le \lam <\mu_{h+1,t}^+(c,L)$.

Observe that $\lam_{n,t}^+ \le \lam$ does not implies $\lam_{n,t}^-\le\lam$, and by using similar arguments in all the cases we obtain
the following bounds:
\begin{align} \label{enes}
\begin{split}
2n-1 &\le N(\lam, t, (0,L)) \le 2n+1 \\
2j-1 & \le N(\lam, t, (0,c)) \le 2(j+1)+1 \\
2h-1 & \le N(\lam, t, (c,L)) \le 2h+1.
\end{split}
\end{align}

We claim now that
$$-9\le N(\lam,t, (0,L))) - N(\lam,t,(0,c))-N(\lam,t,(c,L)) \le  11.$$

The eigenfunctions corresponding to $\mu_{j,t}^+(0,c) $ and $\mu_{h,t}^+(c,L) $  have $j+1$ zeros
in $[0,c]$ and $h+1$ in $[c, L]$ respectively. Since both solutions have a zero at $c$, we have
$j+h+1$ zeros in $[0, L]$.

Now, let $\mu = \max\{ \mu_{j,t}^+(0,c), \mu_{h,t}^+(c,L) \}$, and let us solve equation \eqref{fi}
for $\lambda=\mu$. We start the shooting at $0$ if $\mu =   \mu_{j,t}^+(0,c)$ with
$\varphi(\mu,0)=0$, or we solve it backwards starting from $L$ if $\mu = \mu_{h,t}^+(c,L)$ with
$\varphi(\mu,L)=\pi$.  Hence, Lemma \ref{zeros} implies that this solution has at least
$i+j+1$ zeros in $[0, L]$.

We compare now this solution with the one corresponding to $\lam_{n+1,t}^+$ (if  $\mu =
\mu_{j,t}^+(0,c)$) or the one corresponding to $\lam_{n+2,t}^-$ (if  $\mu = \mu_{h,t}^+(c,L)$). In
both cases, we get
\begin{equation}\label{ineq}
j+h+1 \le n+3.
\end{equation}

We can obtain another inequality starting with the eigenfunctions corresponding to
$\mu_{j+2,t}^+(0,c) $ and $\mu_{h+1,t}^+(c,L)$, they have $j+3$ zeros in $[0,c]$ and $h+2$ in
$[c, L]$,  respectively. Hence, we have $j+h+4$ zeros in $[0, L]$. Comparing as before, by
choosing now $\mu = \min\{\mu_{j+2,t}^+(0,c) , \mu_{h+1,t}^+(c,L)\}$, the corresponding
solution has lower than $j+h+4$ zeros, and the zeros of the eigenfunctions corresponding to
$\lam_{n,t}^+$ or $\lam_{n-1,t}^-$ can be bounded below by $n$. So, we get
\begin{equation}\label{ineq2}
n\le j+h+4.
\end{equation}

Therefore, inequalities \eqref{ineq} and \eqref{ineq2}, together with inequalities \eqref{enes}  implies the claim,
and the proof is finished.
\end{proof}

\section{The proof of the main theorems}

In this section we prove Theorems \ref{main}, \ref{ndelam} and \ref{campanato}.
 For convenience, we prove first lower and upper bounds for the eigenvalue counting function, and we introduce the following notation:  given two functions $m$, $n$ we denote
$$d_t(m,n)=m^{-\frac{1}{2}}+ (n t)^{-\frac{1}{2}}.$$

\begin{lema}\label{lemasuarez}
Let $t>0$ be fixed, and let $\{\lam_{k,t}^\pm\}_{k\in\N}$ be the eigenvalues of problem \eqref{ecux}
in $(0,L)$ and suppose that $\underline{m}\leq m \leq \overline{m}$, $\underline{n}\leq n\leq
\overline{n}$. Then
\begin{align*}
 &N(\lam,t,(0,L)) \geq \frac{4\sqrt{\lam} L}{\pi}    \big(d_t(\underline{m},\underline{n})\big)^{-1} -1,  \\
 &  N(\lam,t,(0,L))  \leq  \frac{4\sqrt{\lam}  L}{\pi}  \big(d_t(\overline{m},\overline{n})\big)^{-1} + 2.
\end{align*}
\end{lema}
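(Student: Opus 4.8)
The plan is to prove Lemma~\ref{lemasuarez} by comparing the eigenvalues of problem~\eqref{ecux} with the explicit eigenvalues of the constant-coefficient problem. First I would invoke the monotonicity of the half-eigenvalues with respect to the weights (Lemma~\ref{lemapeso}): since $\underline{m}\le m\le\overline{m}$ and $\underline{n}\le n\le\overline{n}$, we have
$$\lam_{k,t}^\pm(\overline{m},\overline{n})\le\lam_{k,t}^\pm(m,n)\le\lam_{k,t}^\pm(\underline{m},\underline{n})$$
for every $k$. Consequently the counting function is squeezed between the counting functions for the two constant-weight problems: $N_{\overline{m},\overline{n}}(\lam,t,(0,L))\le N(\lam,t,(0,L))\le N_{\underline{m},\underline{n}}(\lam,t,(0,L))$.

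Next I would read off the constant-coefficient counting function from the explicit formulas~\eqref{autovcte}. For constant weights, writing $a=\underline{m}^{-1/2}+(\underline{n}t)^{-1/2}=d_t(\underline{m},\underline{n})$, the even-indexed eigenvalues satisfy $(\lam_{k,t}^\pm)^{1/2}=\frac{k\pi}{2L}a$, while the odd-indexed ones differ from this only by a bounded shift of $\pm\frac{\pi}{2L}(\underline{m}^{-1/2}-(\underline{n}t)^{-1/2})$, which is irrelevant up to an additive $O(1)$. Thus $\lam_{k,t}^\pm\le\lam$ is equivalent, up to changing $k$ by at most a fixed constant, to $k\le\frac{2L\sqrt{\lam}}{\pi a}$. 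Counting both the $+$ and the $-$ families (hence the factor of $2$ in the definition of $N$, and the resulting factor $4$ after combining with the $2L$), we get
$$N_{\underline{m},\underline{n}}(\lam,t,(0,L))=\frac{4\sqrt{\lam}L}{\pi}\,a^{-1}+O(1),$$
and I would make the $O(1)$ explicit: a careful bookkeeping of the floor functions and the $\pm1$ shifts between the odd/even labels yields the stated constants, namely the lower bound loses at most $1$ and the upper bound gains at most $2$. The same computation with $\overline{m},\overline{n}$ in place of $\underline{m},\underline{n}$ gives the matching estimate from the other side.

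Combining the two ingredients—sandwiching $N$ between the two constant-weight counting functions, then replacing each of those by its explicit value with the controlled error term—yields exactly
$$N(\lam,t,(0,L))\ge\frac{4\sqrt{\lam}L}{\pi}\big(d_t(\underline{m},\underline{n})\big)^{-1}-1,\qquad N(\lam,t,(0,L))\le\frac{4\sqrt{\lam}L}{\pi}\big(d_t(\overline{m},\overline{n})\big)^{-1}+2.$$
The only delicate point is the precise value of the additive constants: one must track the discrepancy between the labelling by nodal domains in the $+$ and $-$ sequences and the position of $\sqrt{\lam}$ relative to the arithmetic progressions $\frac{k\pi}{2L}d_t$. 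I expect this to be the main obstacle—not a conceptual one, but a matter of carefully counting how many of the $\lam_{k,t}^\pm$ can fall below $\lam$ by checking the worst case in the interlacing~\eqref{autov_d}, since the odd-index formulas shift the two families in opposite directions. Everything else is a direct consequence of Lemma~\ref{lemapeso} and the explicit spectrum.
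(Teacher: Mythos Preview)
Your approach is essentially the same as the paper's: invoke Lemma~\ref{lemapeso} to sandwich the eigenvalues between those of the constant-weight problems, then read off the counting function from the explicit formulas~\eqref{autovcte}. The paper streamlines the bookkeeping by counting only the even-indexed curves first (where the formula is clean) and then noting that each pair of odd curves sits between two consecutive pairs of even curves, which accounts for the extra $+2$ in the upper bound; your version, tracking the odd curves directly via the bounded shift $\pm\frac{\pi}{2L}\bigl(m^{-1/2}-(nt)^{-1/2}\bigr)$, amounts to the same thing.

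One slip to fix: your displayed sandwiching inequality for the counting functions is reversed. From $\lam_{k,t}^\pm(\overline{m},\overline{n})\le\lam_{k,t}^\pm(m,n)\le\lam_{k,t}^\pm(\underline{m},\underline{n})$ it follows that
\[
N_{\underline{m},\underline{n}}(\lam,t,(0,L))\le N(\lam,t,(0,L))\le N_{\overline{m},\overline{n}}(\lam,t,(0,L)),
\]
not the other way around (smaller eigenvalues mean \emph{more} of them lie below $\lam$). With the inequality as you wrote it, the argument would yield the bounds with $\underline{m},\underline{n}$ and $\overline{m},\overline{n}$ interchanged, which is false. Since your final conclusion matches the lemma, this is evidently a transcription error rather than a conceptual one, but it should be corrected.
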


\begin{proof}The proof follows as a consequence of the monotonicity of eigenvalues respect to the weight in Lemma \ref{lemapeso}.
For $k$ even we have that
\begin{equation}
\frac{k^2\pi^2}{4L^2}\big(d_t(\overline{m},\overline{n}) \big)^2    \leq \lam_{k,t}^\pm \leq \frac{k^2\pi^2}{4L^2} \big(d_t(\underline{m},\underline{n}\big) \big)^2.
\end{equation}
Consequently, if we consider only the even curves,
\begin{align*}
\#\{ \lam_{k,t}^\pm \leq \lam, k \textrm{ even} \} &\geq   \#\Big\{k:  \frac{k^2\pi^2}{4L^2}  d_t(\underline{m},\underline{n})^2 \leq \lam , k \mbox{ even} \Big\}  \\
\#\{ \lam_{k,t}^\pm \leq \lam, k \textrm{ even} \} &\leq  \#\Big\{k:  \frac{k^2\pi^2}{4L^2} d_t(\overline{m},\overline{n})^2 \leq \lam  , k \mbox{ even} \Big\}.
\end{align*}

Hence, denoting by $\lfloor{x}\rfloor$
the  largest integer not greater than  $x$, we have
\begin{align*}
\#\{ \lam_{k,t}^\pm \leq   \lam, k \textrm{ even} \} &\geq
\left\lfloor
\frac{ 2\sqrt{\lam} L }{\pi}    d_t(\underline{m},\underline{n})^{-1} \right\rfloor \\
\#\{ \lam_{k,t}^\pm \leq \lam, k \textrm{ even} \} &\leq \left\lfloor \frac{ 2 \sqrt{\lam} L }{\pi}  d_t(\overline{m},\overline{n})^{-1}\right\rfloor.
\end{align*}

The lower bound follows by bounding  $\lfloor{x}\rfloor \ge x-1$, and the other one follows
since $\lfloor{x}\rfloor\le  x$ and there is a pair of odd curves between
two pairs of even curves. The Lemma is proved.
\end{proof}

We are ready to prove Theorem \ref{ndelam}. Essentially, we split $(0,L)$ in finitely many
intervals, and then we apply Theorem \ref{prop_suma} and Lemma \ref{lemasuarez}.

\begin{proof}[Proof of Theorem \ref{ndelam}]

Let us split $(0,L)$ as
$$(0,L)=\cup_{1\leq j \leq J} I_j,$$
 where $I_j \cap I_k=\emptyset$, and $|I_j|=L/J = \eta$. We define
$$\underline{m}_j=\inf_{x\in I_j} m(x), \quad \overline{m}_j=\sup_{x\in I_j} m(x).$$
Since $m$ and $n$ are continuous functions they are Riemann integrable, and given any $\ve>0$, we can choose $\eta >0$ such that
\begin{align*}
&\sum_{j=1}^J \eta\big((\underline{m}_j)^{-\frac{1}{2}}+(\underline{n}_j t)^{-\frac{1}{2}} \big)^{-1} \ge \int_0^L \big(m^{-\frac{1}{2}}+(n t)^{-\frac{1}{2}} \big)^{-1} \,dx  - \ve, \\
&\sum_{j=1}^J \eta\big((\overline{m}_j)^{-\frac{1}{2}}+(\overline{n}_j t)^{-\frac{1}{2}} \big)^{-1} \le \int_0^L \big(m^{-\frac{1}{2}}+(n t)^{-\frac{1}{2}} \big)^{-1} \,dx  + \ve.
\end{align*}

From Theorem \ref{prop_suma} we have that
$$N(\lam,t,(0,L)) =  \sum_{j=1}^J N(\lam, t, I_j) + O(J),$$
and Lemma \ref{lemasuarez} implies
\begin{align*}
 \sum_{j=1}^J N(\lam, t, I_j) &\leq  \sum_{j=1}^J  \frac{4\eta\sqrt{\lam}  }{\pi}  \big((\overline{m}_j)^{-\frac{1}{2}}+(\overline{n}_j t)^{-\frac{1}{2}} \big)^{-1} +O(J)\\
&= \frac{4\sqrt{\lam}  }{\pi}  \Big(\int_0^L \big(m^{-\frac{1}{2}}+(n t)^{-\frac{1}{2}} \big)^{-1} \,dx + \ve\Big) + O(J)
\end{align*}

\begin{align*}
 \sum_{j=1}^J N(\lam, t, I_j) &\geq  \sum_{j=1}^J  \frac{4\eta\sqrt{\lam}  }{\pi}  \big((\underline{m}_j)^{-\frac{1}{2}}+(\underline{n}_j t)^{-\frac{1}{2}} \big)^{-1}-O(J) \\
&= \frac{4\sqrt{\lam}  }{\pi}  \Big(\int_0^L \big(m^{-\frac{1}{2}}+(n t)^{-\frac{1}{2}} \big)^{-1} \,dx  - \ve\Big) -O(J)
\end{align*}
and the proof is complete.
\end{proof}

Let us prove now Theorem \ref{campanato}.

\begin{proof}[Proof of Theorem \ref{campanato}]
 Let $0<\eta<\eta_1<0$ be fixed, and let us split  $(0,L)=\cup_{1\leq j \leq J} I_j$  as before,
 where $I_j \cap I_k=\emptyset$, $|I_j|=L/J = \eta$.

 Let us  define, for $1\le i \le J$
$$
	\varphi(\lam)=\frac{2\sqrt{\lam}}{\pi}  \int_0^L d_t(m,n), \qquad
	\varphi(\lam,I_j)= \frac{2\eta  \sqrt{\lam} d_{I_j}}{\pi}
$$
where $d_{I_j}=|I_j|^{-1} \int_{I_j} d_t(m,n)$.

From Theorem \ref{prop_suma} we obtain
\begin{equation} \label{dd1}
	\sum_{ j=1}^J N(\lam,t,I_j,d_t) -\varphi(\lam) - O(J) \leq
	N(\lam, t,(0,L),d_t) -\varphi(\lam)
\end{equation}
\begin{equation} \label{dd2}
	N(\lam,t,(0,L),d_t) -\varphi(\lam)  \leq
	\sum_{ j=1}^J  N(\lam,t,I_j,d_t) -\varphi(\lam) +  O(J).
\end{equation}

Let us bound the left hand side of inequality \eqref{dd1}; we have
\begin{align*}
\begin{split}
	\sum_{j=1}^J N(\lam,t,I_j,d_t) -\varphi(\lam) &\leq
	\sum_{j=1}^J N(\lam,t,I_j,d_{I_j})-\varphi(\lam,I_j)\\ &\quad +\sum_{j=1}^J \varphi(\lam,I_j) - \varphi(\lam) \\ & \quad +
	\sum_{j=1}^J N(\lam,t,I_j,d_t)-\sum_{j=1}^J N(\lam,t,I_j,d_{I_j}).
\end{split}	
\end{align*}

The first term can be bounded by using Lemma \ref{lemasuarez},
$$
	\sum_{j=1}^J \Big|	 N(\lam,t,I_j,d_{I_j})-\varphi(\lam,I_j)\Big| \leq J M L,
$$
where $M$ is the maximum of $d_t$ in $(0,L)$.

Rewriting the second term, we can bound it as
$$
	\sum_{j=1}^J \varphi(\lam,I_j) - \varphi(\lam) = \pi^{-1}2\sqrt{\lam} \sum_{j=1}^J \int_{I_j} \Big( d_t- d_{I_j}\Big).
$$
The assumed regularity of the weight and the $\gamma-$condition gives
$$ \sum_{j=1}^J \varphi(\lam,I_j) - \varphi(\lam) = \pi^{-1}\sqrt{\lam} J c_1 \eta^\gamma.
$$

The third term can be handled using the monotonicity of the eigenvalues respect to the weight and the additivity of the eigenvalue counting function, Lemma \ref{lemasuarez} and Theorem \ref{prop_suma}, since $d_t\leq d_{I_j}+ |d-d_{I_j}|$,
$$
	N(\lam,t,I_j,d_t) \leq N(\lam,t,I_j, d_{I_j}) +N(\lam,t,I_j ,|d_t-d_{I_j}|) + O(J),
$$
which gives
\begin{align*}
\begin{split}
	\sum_{j=1}^J N(\lam,t,I_j,d_t)-N(\lam,t,I_j,d_{I_j}) & \leq \sum_{j=1}^J N(\lam,t,I_j,|d_t-d_{I_j}|) \\
& \leq \sum_{j=1}^J
\frac{2\sqrt{\lam}}{\pi}\int_{I_j}|d_t-d_{I_j}| +O(J) \\
& \le
\pi^{-1} 2\sqrt{\lam} J \eta^\gamma +  O(J),
\end{split}
\end{align*}
 we have used the same arguments as above, the regularity of the weight and the $\gamma-$condition.

Collecting terms, and by using that $J=L \eta^{-1}$, we can bound the left hand side of
\eqref{dd1},
\begin{equation} \label{exx1}
	\left| \sum_{ j=1}^J N(\lam,t,I_j,d_t) -\varphi(\lam) - O(J) \right| \le  C (\sqrt{\lam}\eta^{\gamma-1}+\eta^{-1}).
\end{equation}

In much the same way, we can bound the right-hand side of inequality \eqref{dd2}, since we only need to change the constant in the $O(J)$ term, obtaining
\begin{equation} \label{exx2}
\sum_{ j=1}^J  N(\lam,t,I_j,d_t) -\varphi(\lam) +  O(J) \le C (\sqrt{\lam}\eta^{\gamma-1}+\eta^{-1}).
\end{equation}
Hence, we get from \eqref{exx1} and \eqref{exx2}
$$
	\left|N(\lam,t,(0,L),d_t) -\varphi(\lam)\right|\leq C (\sqrt{\lam}\eta^{\gamma-1}+\eta^{-1}).
$$

We choose now $\eta=  \lam^{-\alpha/2}$ with $0<\alpha<1$. We can have $\sqrt{\lam}\eta^{\gamma-1} \sim \sqrt{\lam}^\delta$ and $\sqrt{\lambda}^\alpha \sim \sqrt{\lam}^\delta$  only for $\delta\in [\frac{1}{\gamma},1]$,  and the proof is finished.
\end{proof}

By taking $\lam\sim \lam_{k,t}^\pm$ in Theorem \ref{ndelam}, the following asymptotic behavior of the
eigenvalues is obtained:

\begin{cor}
Given a fixed $t\in\R$, the following asymptotic behavior holds:
\begin{equation} \label{asintt}
(\lam_{k,t}^\pm)^\frac{1}{2} \sim \frac{ \pi k}{2} \left( \int_0^L \big(m^{-\frac{1}{2}} +  (tn)^{-\frac{1}{2}}\big)^{-1}\, dx \right)^{-1}.
\end{equation}
\end{cor}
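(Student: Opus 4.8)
The plan is to read off the asymptotics of the individual half-eigenvalues from the Weyl law in Theorem \ref{ndelam} by the standard device of counting exactly at the spectral parameter. Fix $t>0$. From the structure \eqref{autov_d} of the spectrum of \eqref{ecux}, for a given index $k$ we have $N(\lam_{k,t}^{\pm},t,(0,L)) = k + O(1)$: indeed $\lam_{k,t}^{\pm}$ is, by the characterization in terms of the Pr\"ufer angle, the $k$-th element of one of the two interlaced sequences $\{\lam_{j,t}^{+}\}$, $\{\lam_{j,t}^{-}\}$, so at $\lam = \lam_{k,t}^{\pm}$ there are exactly $k$ eigenvalues of that sequence below or equal, and at most $k$ (and at least $k-1$, or some fixed shift) of the other, giving $N = 2k + O(1)$ after one is careful about which of $+$/$-$ one fixes; in any case $N(\lam_{k,t}^{\pm},t,(0,L)) = 2k + O(1)$ as $k\to\infty$. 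I would spell out this $O(1)$ bound using precisely the interlacing inequalities already exploited in the proof of Theorem \ref{prop_suma} (e.g. the chain $\lam_{n-1,t}^{-}<\lam_{n,t}^{+}\le\lam<\lam_{n+1,t}^{+}<\lam_{n+2,t}^{-}$), so no new idea is needed here.

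Next I substitute $\lam = \lam_{k,t}^{\pm}$ into Theorem \ref{ndelam}. Writing $A_t := \int_0^L \big(m^{-\frac12}+(tn)^{-\frac12}\big)^{-1}\,dx$ for brevity, Theorem \ref{ndelam} gives
\begin{equation*}
2k + O(1) = N(\lam_{k,t}^{\pm},t,(0,L)) = \frac{4\sqrt{\lam_{k,t}^{\pm}}}{\pi}\,A_t + o\big(\sqrt{\lam_{k,t}^{\pm}}\big)
\end{equation*}
as $k\to\infty$. Since $\lam_{k,t}^{\pm}\nearrow\infty$, the term $o(\sqrt{\lam_{k,t}^{\pm}})$ is genuinely lower order, and solving for $\sqrt{\lam_{k,t}^{\pm}}$ yields
\begin{equation*}
\sqrt{\lam_{k,t}^{\pm}} = \frac{\pi (2k + O(1))}{4 A_t} + o\big(\sqrt{\lam_{k,t}^{\pm}}\big) = \frac{\pi k}{2 A_t}\big(1 + o(1)\big),
\end{equation*}
which is exactly \eqref{asintt}. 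The one small point requiring care is that Theorem \ref{ndelam} is a statement about $\lam\to\infty$ along the continuum, so to use it at the discrete points $\lam_{k,t}^{\pm}$ I should note that $\lam_{k,t}^{\pm}\to\infty$ (immediate from \eqref{autov_d}) and that the error estimate, being uniform in $\lam$, applies at any sequence of parameters tending to infinity; alternatively one sandwiches $\lam_{k,t}^{\pm}$ between consecutive values where $N$ jumps and applies the Weyl law at those.

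The only genuine obstacle is bookkeeping: verifying cleanly that $N(\lam_{k,t}^{\pm},t,(0,L)) = 2k + O(1)$ rather than, say, $k + O(1)$, because of the doubled counting convention and the two interlaced sequences $\{\lam_{j,t}^{+}\}$ and $\{\lam_{j,t}^{-}\}$. Concretely, for fixed $k$, among $\lam_{1,t}^{+},\dots$ the first $k$ lie at or below $\lam_{k,t}^{+}$ (if we take the $+$ branch), and among $\lam_{1,t}^{-},\dots$ the number at or below $\lam_{k,t}^{+}$ is $k + O(1)$ by the interlacing already recorded above; summing gives $2k + O(1)$. Once this is in hand the corollary is immediate, and since only the leading order is claimed in \eqref{asintt}, the $O(1)$ shift is harmless. (If one instead wanted the sharp remainder from Theorem \ref{campanato}, the same substitution with the $O(\lam^{\delta/2})$ error would give a quantitative version, but that is not needed for the stated corollary.)
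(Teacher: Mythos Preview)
Your argument is correct and follows exactly the paper's approach: the paper's proof is the single observation that $N(\lam_{k,t}^\pm)\sim 2k$, which together with Theorem \ref{ndelam} immediately gives \eqref{asintt}. You have simply spelled out the bookkeeping for the relation $N(\lam_{k,t}^\pm,t,(0,L))=2k+O(1)$ and the inversion step in more detail than the paper does.
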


It follows by observing that $N(\lam_{k,t}^\pm) \sim 2 k$.

\begin{rem}\label{asintotico}
 We have obtained a generalization of \eqref{autovcte}. When $k$ is even, we recover the same formula for constant coefficients. However, for $k$ odd, there is an error term which is of order $O(1)$.

Moreover, when $m=n$, we have
$$\lam_{k,t}^\frac{1}{2} \sim \frac{ k\pi}{2} \big(1+t^{-\frac{1}{2}}\big) \left(\int_\Omega m^\frac{1}{2} \, dx \right)^{-1} \sim
\lam_{k}^\frac{1}{2}\big(1+t^{-\frac{1}{2}}\big),$$ where $\lam_k$ is the $k-$th
eigenvalue of problem \eqref{ecu2}, recovering the classical Weyl's asymptotic expression
for the eigenvalues of the weighted problem.
\end{rem}

We can prove now the asymptotic expression of the curves in the Fu{\v{c}}{\'{\i}}k  spectrum.

 \begin{proof}[Proof of Theorem \ref{main}]
Recall that $\mathcal{K}_\theta$ is a symmetric region in the first quadrant between
two rays through the origin forming an angle $\theta\in(0,\pi/2)$ . We can rewrite Eq.
\eqref{asintt} as
$$1 \sim \frac{ \pi k}{2} \left( \int_0^L \left((\lam_{k,t}^\pm  m)^{-\frac{1}{2}} +
(\lam_{k,t}^\pm  tn)^{-\frac{1}{2}}\right)^{-1}\, dx \right)^{-1},
$$
and by calling $\alpha_k^\pm=\lam_{k,t}^\pm$, $\beta_k^\pm=\ t\lam_{k,t}^\pm$, we obtain the desired
asymptotic behavior of the curves $\mathcal{C}_k^\pm$  inside $\mathcal{K}_\theta$:
$$1\sim \frac{ \pi k}{2} \left( \int_\Omega \big((\alpha_k^\pm m)^{-\frac{1}{2}} +  (\beta_k^\pm n)^{-\frac{1}{2}}\big)^{-1}\, dx \right)^{-1}$$
as $k\to\infty$.

In order to improve the remainder estimate, let us observe that, for each $t$ fixed,
Eq. \eqref{asintt} implies that there exists a constant $C(t)$ such that
$$\lam_{k,t}^\pm  \sim C(t) k^2.$$
Now,
 Theorem \ref{campanato}  gives
$$
		2k = N(\lambda_k^\pm,(0,L))=\frac{4}{\pi} \int_0^L  \big((m \lambda_k^\pm )^{-\frac{1}{2}}+(n \lambda_k^\pm t)^{-\frac{1}{2}} \big)^{-1} \, dx + O(k^\delta),
$$
which is equivalent to
$$
		1 =  \frac{\pi k}{2} \int_0^L  \big((m\alpha_k^\pm)^{-\frac{1}{2}}+(n \beta_k^\pm )^{-\frac{1}{2}} \big)^{-1} \, dx + O(k^\delta),
	$$
since $\alpha_k^\pm=\lambda_k^\pm$, $\beta_k^\pm=t\lambda_k^\pm$.   However, since the constants in the error term depend on
$t$, we
 can obtain uniform bounds only
for $t$ in a compact interval bounded away from zero and infinity, that is, the result holds
only on cones $\mathcal{K}_\theta$ not touching the edges.

The proof is finished.
\end{proof}

\section{Some numeric computations}

It is possible to compute the weighted Fu{\v{c}}{\'{\i}}k  eigenvalues numerically as
in \cite{BrRe}, where Brown and Reichel proposed an algorithm based on Newton's method,
by using the polar coordinates $r$, $\varphi$ as in equation \eqref{aaaa}, solving
 the ordinary differential Eq. \eqref{fi}.

Here, we have used a slightly different idea. The following pseudo code can be easily
implemented, and given two bounds $\lambda_*$, $\lambda^*$, which can be obtained
explicitly from the formulas for the constant coefficient problem and Lemma \ref{lemapeso},
a bisection argument computes the eigenvalue in a fixed line of slope $t$ with the desired
accuracy $\ve$. We start with $\lambda=(\lambda^*-\lambda_*)/2$, and we  solve
 $$
\varphi'(x)=
\sqrt{\lam f(x)} + \frac{1}{2}\frac{f'(x)}{f(x)}\cos(\varphi(x)) \sin(\varphi(x))
$$
alternating between the weights $f=m$ and $f=n$ whenever a multiple of $\pi$ is reached. If
$\varphi=k\pi$ is obtained, we set $\lambda_*=\lambda$, if we reach the extreme $L$  of
the interval and $\varphi(L)<k\pi$, we set $\lambda^*=\lambda$; we restart the process until
the difference $\lam^*-\lam_*$ is less than a prefixed tolerance error. We omit the error
estimate which came from the numerical solution of the ordinary differential equation, which
can be handled as in \cite{BrRe}.

\begin{align} \label{algo}
\begin{split}
  &\textbf{inputs: } k,m,n,t,\ve, \lam^*, \lam_* \\
  &\textbf{**********************}\\
  &k\texttt{: number of eigenvalue, } m,n \texttt{: weights, } t \texttt{: slope of the line, } \\
  &\ve \texttt{: accuracy, } \lam^*, \lam_* \texttt{: bounds of } \lam_k \\
  &\textbf{**********************}\\
  &\texttt{while } \lam^*-\lam_*>\ve\\
  &\qquad\lam=(\lam^*+\lam_*)/2\\
  &\qquad lastzero=0\\
  &\qquad \texttt{for } i=1 \texttt{ to }k\\
  &\qquad \qquad \texttt{if } i \textrm{ mod } 2\neq 0 \texttt{ then } f(x)=m(x) \texttt{ else } f(x)=tn(x)\\
  &\qquad \qquad \texttt{find } \varphi(x) \texttt{ by solving } \eqref{fi} \texttt{ in	} (lastzero,1) \\
  &\qquad \qquad \texttt{find } w \texttt{ such that } \varphi(w)=\pi\\
  &\qquad \qquad lastzero=w\\
  &\qquad \texttt{end}  \\
  &\qquad \texttt{if  } lastzero<1 \texttt{ then }\lam^*=(\lam^*+\lam_*)/2 \texttt{ else } \lam_*=(\lam^*+\lam_*)/2\\
  &\texttt{end}  \\
  &\textbf{output: } \lam \\
\end{split}
\end{align}

Now, in order to compare the asymptotic results with the computed values of the
eigenvalues,  we consider problem \eqref{ecux} in $[0,1]$, with the weights
$m(x)=1+(x+1)^{-1}$ and $n(x)=1+cos^2(2x)$. In Figure 1 we show the first curves
of the Fu{\v{c}}{\'{\i}}k spectrum by using the algorithm \eqref{algo}.

\begin{figure}[ht]\label{figura}
\includegraphics[width=8cm]{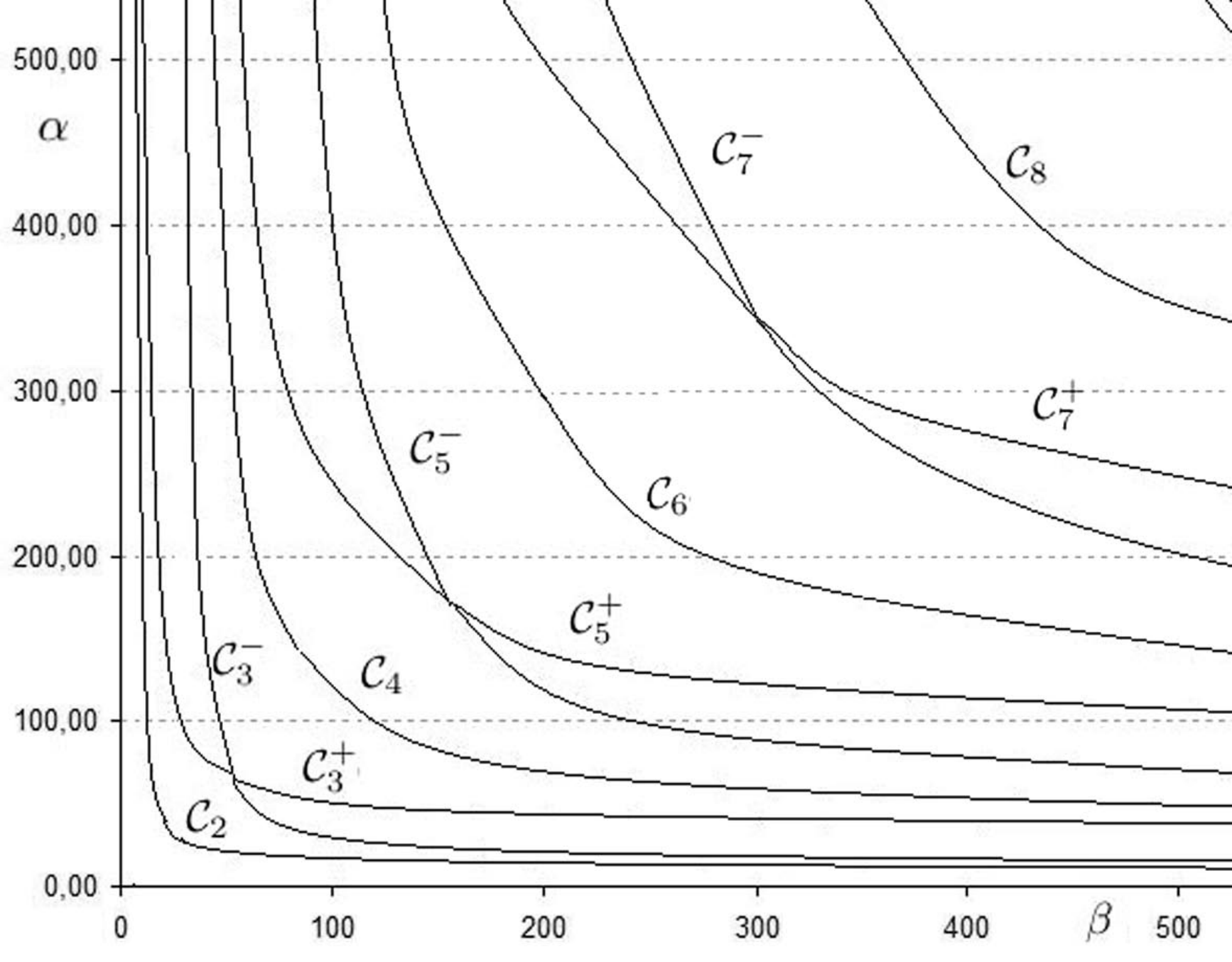}
\vspace*{8pt}
\caption{The Fu{\v{c}}{\'{\i}}k  spectrum for $m(x)=1+(x+1)^{-1}$ and $n(x)=1+cos^2(2x)$.}
\end{figure}

In Table 1  we compute $\lam_{4,t}^+ $ and $t\lam_{4,t}^+$ for different values of $t$ taking
$\ve=0.0001$. For $t\to 0$ and $t\to \infty$ we can compare with the eigenvalues
$\mu_2(m)= 23.44031$, $\mu_2(n)=29.08$.

\begin{table}[th]\label{tabla1}
\caption{Eigenvalues $\alpha=\lam_{4,t}^+ $ and $\beta=t\lam_{4,t}^+$ for different values of $t$.}
\begin{tabular} {c c c}
t  & $\alpha$  & $\beta$  \\ 
$10^5 \qquad $&	23,577&	2357747,078\\
$10^4 \qquad $&	23,939&	239291,613\\
$10^3 \qquad $&	25,110&	25110,064\\
$10^2 \qquad $&	28,994&	2899,356\\
$10  \qquad $&	43,172&	431,716\\
$1  \qquad $&	106,483&	106,483\\
$10^{-1} \qquad $&	486.812&	48,649\\
$10^{-2} \qquad $&	3476.799&	34,768\\
$10^{-3} \qquad $&	30800.052&	30,800\\
$10^{-4} \qquad $&	295937.669&	29,594\\
$10^{-5} \qquad $&	2921329.105&	29,213\\
\end{tabular} 
\end{table}

 In Table 2, for a fixed line with $t=30$, we compute the value of $\lam_{k,t}^+$ for several values of $k$ by using the algorithm
 \eqref{algo} and we compare the obtained values with the asymptotic values obtained in \eqref{asintt}.

\begin{table}[th]
\caption{Higher eigenvalues in a fixed line with $t=30$.}
\begin{tabular}{c c c c }
$k$  & $\lam_{k,t}^+$   & $\lam_{k,t}^+$ & Relative error \\
 & (from \eqref{asintt}) & (from \eqref{algo}) \\ 
10	&211,144 	&212,299	& 0,005 \\
50&	5285,967	&5300,702	 & 0,004 \\
100&	21145,257	& 21132,488	& 0,0006 \\
200	&84503,308 	& 84529,952 &	0,0003 \\
500	&528618,283 & 528312,203	& 0,0006 \\
1000	&2111447,975 &	2113248,815	& 0,0008
\end{tabular} 
\end{table}

Finally, for a  fixed value of $k=28$, we compare in Table 3 for  $\alpha$ the numerical
approximation given by \eqref{algo} and the asymptotic given by \eqref{asintt} varying $t$.

\begin{table}[th]
\caption{Values of $\lam_{28,t}^+$ for different values of $t$.}
\begin{tabular}{c c c c}
$t$  & $\lam_{28,t}^+$   & $\lam_{28,t}^+$ &
 Relative error \\
& (from \eqref{algo}) & (from \eqref{asintt}) \\ 
0,1	 &  23202,100 &  23294,798 &  	0,0039 \\
0,5	 &  7550,103	 &  	7588,970 &  	0,0051 \\
1	 &  5094,391 &  	5124,467 &  	0,0058 \\
5	 &  2565,027  &  2577,485  &  	0,0048 \\
10 &  	2090,991 &  	2099,903  &  	0,0042 \\
1000 &  	1226,496  &  	1231,067 &  	0,0037 \\
100000  &  	1152,512 &  	1156,209 &  	0,0031 \\
\end{tabular} 
\end{table}


\section*{Acknowledgements}

This work was partially supported by Universidad de Buenos Aires under grant UBACYT
20020100100400 and by CONICET (Argentina) PIP 5478/1438.

\end{document}